\numberwithin{equation}{section}
\newcommand{\R}{\Rz}
\newcommand{\Rz}{\mathbb{R}}
\newcommand{\eps}{\varepsilon}
\newcommand{\down}{\downarrow}
\newcommand{\dualoperator}
  \def\calC{{\mathcal C}}
 \def\calE{{\mathcal E}} \def\calF{{\mathcal F}}
\def\calG{{\mathcal G}}  
 \def\calK{{\mathcal K}} \def\calL{{\mathcal L}}
 \def\calN{{\mathcal N}} 
\def\calS{{\mathcal S}}  
 \def\calW{{\mathcal W}} 
\def\rmd{{\mathrm d}}  
\def\rmg{{\mathrm g}}
  \def\rmC{{\mathrm C}}
\def\rmD{{\mathrm D}}  \def\rmF{{\mathrm F}}
\def\rmG{{\mathrm G}}
\def\dd{\;\!\mathrm{d}} 
\newcommand{\pairing}[4]{ \sideset{_{ #1 }}{_{ #2 }}  {\mathop{\langle #3 , #4
\rangle}}}
\newcommand{\paired}[2]{ \sideset{_{ X^* }}{_{ X }}  {\mathop{\langle #1 , #2
\rangle}}}
\newcommand{\nchi}{{\raise.2ex\hbox{$\chi$}}}
\definecolor{ddcyan}{rgb}{0,0.1,0.9}
\definecolor{ddmagenta}{rgb}{0.8,0,0.8}
\definecolor{orange}{rgb}{0.6,0.2,0}
\definecolor{dred}{rgb}{.8,0,0}
\definecolor{ddgreen}{rgb}{0,0.4,0.4}
\definecolor{vgreen}{rgb}{0.1,0.5,0.2}
\newcommand{\piecewiseConstant}[2]{\overline{#1}_{\kern-1pt#2}}
\newcommand{\sft}{\mathsf{t}}
\newcommand{\sfz}{\mathsf{z}}
\newcommand{\sfx}{\mathsf{x}}
 \def\trait #1 #2 #3 {\vrule width #1pt height #2pt depth #3pt}
 \def\fin{\hfill
         \trait .3 5 0
         \trait 5 .3 0
         \kern-5pt
         \trait 5 5 -4.7
         \trait 0.3 5 0
 \medskip}
\newtheorem{theorem}{Theorem}[section]
\newtheorem{remark}[theorem]{Remark}
\newtheorem{corollary}[theorem]{Corollary}
\newtheorem{definition}[theorem]{Definition}
\newtheorem{proposition}[theorem]{Proposition}
\newtheorem{assumption}[theorem]{Assumption}
\newtheorem{lemma}[theorem]{Lemma}
\newtheorem{example}[theorem]{Example}
\newcommand{\berin}{} 
\newcommand{\erin}{} 
\newcommand{\rbe}{}
\newcommand{\rend}{}
\renewcommand{\rmF}{f}
\renewcommand{\rmG}{g}
\newcommand{\Y}{\mathsf Y}
\newcommand{\X}{\mathsf X}
\renewcommand{\H}{\mathsf H}
\newcommand{\Z}{\mathsf Z}
\renewcommand{\SS}{\mathsf S}
\newcommand{\NN}{\mathsf N}
\newcommand{\RR}{\mathsf R}
\renewcommand{\dim}{\mathrm{dim}}
\newcommand{\codim}{\mathrm{codim}}
\newcommand{\ind}{\mathrm{ind}}
\newcommand{\scrE}{\mathscr E}
\newcommand{\Oomega}{X}
\begin{document}
\title{On the transversality conditions and their genericity}
\date{\today}

\author[V.~Agostiniani]{Virginia Agostiniani}
\address{Mathematical Institute, University of Oxford, Woodstock road,
Oxford, OX2 6GG, UK}
\email{\ttfamily virginia.agostiniani\,@\,maths.ox.ac.uk}

\author[R.~Rossi]{Riccarda Rossi}
\address{DICATAM-Sezione di Matematica, Universit\`a di
Brescia, via Valotti 9, I--25133 Brescia, Italy.}
\email{\ttfamily riccarda.rossi\,@\,ing.unibs.it}

\author[G.~Savar\'e]{Giuseppe Savar\'e}
\address{Dipartimento di Matematica ``F.~Casorati'', Universit\`a di Pavia.
Via Ferrata, 1 -- 27100 Pavia, Italy.}
\email{\ttfamily giuseppe.savare\,@\,unipv.it}

\thanks{V.~A. has received funding from the European Research Council under the
European Union's Seventh Framework Programme (FP7/2007-2013) / ERC grant agreement ${\rm n^o}$ 291053.
R.~R. and G.~S. have been partially supported by  a MIUR-PRIN 2012 grant for the  project ``Calculus of Variations''.}

\begin{abstract}
In this note we review some results on the \emph{transversality
conditions} for a smooth Fredholm map $\rmF: \X \times (0,T) \to \Y$
between two Banach spaces $\X,\Y$. These conditions are well-known
in the realm of bifurcation theory and commonly accepted as
``generic''. Here we show that under the transversality assumptions
the  sections $\calC(t)=\{x:\rmF(x,t)=0\}$ of the zero set of $f$
are discrete for every $t\in (0,T)$ and we discuss a somehow
explicit family of perturbations of $\rmF$ along which
transversality holds up to a residual set.

The application of these results to the case
when $\rmF$ is the $\X$-differential of
a time-dependent energy functional $\scrE:\X\times (0,T)\to \R$ and
$\calC(t)$ is the set of the critical points of $\calE$
provides  \berin the motivation  and   the
main example of this \erin  paper.
\end{abstract}

\maketitle

\section{Introduction}\label{s:intro}

\noindent Let $\X,\Y$ be a couple of Banach spaces and let $\rmF:
\Oomega \times (0,T) \to \Y$ be a $\rmC^2$-map  defined in the open
set $\Oomega\subset \X$.

In this note, we investigate the so-called  \emph{transversality conditions}
for the set $\calS$ of the singular points of $\rmF$, i.e.~the subset
of the zero set $\calC$ of
$\rmF$ where its (partial) differential
$\rmD_x\rmF$ with respect to $x\in \X$ is not invertible:
\begin{equation}
  \label{eq:2}
  \calC:=\Big\{(x,t)\in \Oomega\times (0,T):\rmF(x,t)=0\Big\},
  \quad
    \calS:=\Big\{(x,t)\in\calC:\rmD_x \rmF (x,t) \text{ is non invertible}\Big\}.
\end{equation}
In particular we will focus on two important features
related to transversality.
The first one concerns the topology of
the sections $\calC(t):=\Big\{x\in \Oomega:(x,t)\in \calC\Big\}$ of
$\calC$: it is not difficult to show that
for every $t\in (0,T)$ the sets $\calC(t)$ are discrete, so that each point in $\calC(t)$ is isolated.
A second property we will discuss in some detail \rbe is \rend the
\emph{generic} character of transversality.

We first illustrate \rbe our results,  \rend and their role for
applications, in the simpler finite dimensional setting.
\subsection*{The transversality conditions in the finite-dimensional case}
\label{s:1}

\noindent
Let us consider the case when $\X=\Y=\R^n$.
 \berin The symbol $\langle \cdot,\cdot\rangle$ represents \erin the scalar product in
 $\R^n$, and $\mathbb{M}^{k \times m}$ is the space of matrixes with
 $k$ rows and $m$ columns.
If $K\in \mathbb M^{k\times m}$, we denote by $\NN(K)$ its null
space in $\R^k$, \rbe by \rend $\RR(K)$ its range in $\R^m$, \rbe and by $K^*$ its transposed. \rend 

\begin{definition}[Transversality conditions in $\R^n$]
  \label{ass:tran}
  We say that \berin $\rmF$ satisfies at a point $(x_0,t_0) \in \calS$
 the transversality conditions if\erin
\begin{enumerate}[{\rm (T1)}]
\item
  $\mathrm{dim}(\NN (\rmD_x \rmF (x_0,t_0)))=1$
  $($and
  therefore
  $\mathrm{dim}(\NN(\rmD_x \rmF (x_0,t_0)^*))=1${}$)$.
\item
  If $0\neq w^*\in \NN (\rmD_x \rmF (x_0,t_0)^*)$ then
  $\langle \partial_t \rmF (x_0,t_0),  w^* \rangle \neq 0$.
\item If $0\neq v\in \NN (\rmD_x \rmF (x_0,t_0))$
  then $\langle \rmD_x^2 \rmF (x_0,t_0)[v,v], w^* \rangle
  \neq 0 $.
\end{enumerate}
We say that $\rmF$ satisfies the transversality conditions if they hold at
every point of $\calS$.
\end{definition}
 While referring to  Definition  \ref{ass:tran-infinity} \rbe ahead
 \rend
 for the precise statement of the transversality conditions
 in an infinite-dimensional setting,
  let us point out that, there
  it is crucial, as well as natural,  to require in addition
that $\rmF(\cdot,t)$ is a Fredholm map of index $0$ for every $t \in (0,T)$.

\rbe Conditions (T2) and (T3) \rend look particularly simple when
$n=1$: in this case they read 
\[
\partial_t \rmF(x_0,t_0)  \neq 0, \qquad \partial_{xx}^2 \rmF(x_0,t_0)  \neq 0.
\]
%

\subsection*{Singular perturbation of gradient flows and the
  structure of $\calC(t)$.}
Our motivation for getting further insight on the topology of $\calC(t)$
stems from the study of the  limit as $\eps \down 0$ of the
differential equation
\begin{subequations}
  \label{subeq:gflow}
  \begin{equation}
    \label{gflow-intro}
    \eps x'(t) + f(x(t),t) =0, \quad  t  \in (0,T),
  \end{equation}
  in particular in the case of a gradient flow, when
  \begin{equation}
    \label{eq:10}
    f=\rmD_x\mathscr E,\quad
    \mathscr{E} : \Oomega \times (0,T) \to \R
    \quad\text{ is a $\rmC^3$ functional.}
  \end{equation}
\end{subequations}
The study of the \emph{singular perturbation} problem
(\ref{subeq:gflow}a,b) was carried out \rbe in finite dimension \rend
in \cite{zanini}, where it was shown that
a family $\{x_\eps\}_\eps$ of solutions to \eqref{gflow-intro}
 converges to a piecewise regular curve obtained by connecting  smooth branches of solutions to the equilibrium equation
 \begin{equation}
 \label{equil-intro}
 f(x(t),t) =0, \quad \text{i.e. }  x(t)\in \calC(t),\quad   t  \in (0,T),
\end{equation}
by means
  of heteroclinic solutions of the gradient flow
 \begin{equation}
 \label{pure-gflow}
 \theta'(s) + f(\theta(s),t) =0,\quad
 \lim_{s\to\pm\infty}\theta(s)=x_\pm(t)\quad \text{at every jump point $t$
   for the limit curve $x$.}
  \end{equation}
  The analysis
 in \cite{zanini}
  hinges on the assumption that  $ \rmF= \rmD_x \mathscr{E}$
   satifies the transversality conditions at  its singular points (which are the degenerate critical points of
     $\mathscr{E}$).
     In addition, a
   crucial role is played by the condition that
   at every time $\underbar t\in (0,T)$ there exists at most
   one degenerate critical point $(\underbar x,\underbar t)$ in $\calS$,
   $\rmD^2_x \mathscr E$ is positive semidefinite at $(\underbar x,\underbar t)$
   and the (unique) heteroclinic starting
   from $\underbar x$ ends in a local minimum of $\mathscr
   E(\cdot,\underbar t)$.
   This allows  the author to
   tackle this singular limit  by means of
  refined
  techniques from bifurcation theory, see also \cite{virginia}, where the quasistatic limit of a \emph{second-order} system 
  was
  addressed in  the finite-dimensional case.

  In the forthcoming
   paper \cite{ars},  we develop
   a different \emph{variational} approach to the limit $\eps \to 0$
   of \eqref{gflow-intro},
   even in the setting of an
   infinite-dimensional space $\X$.
   Under general coercivity assumptions on $\mathscr E$,
   we prove that, up to a subsequence, any family $\{x_\eps\}_\eps $
   of solutions
   to \eqref{gflow-intro} pointwise converges to a curve $x$ fulfilling
   the equilibrium equation
    \eqref{equil-intro} and
    a variational jump condition.
   In fact, the crucial property that lies at the core of the analysis  in   \cite{ars}
   is that $\calC(t)$ is discrete for every $t \in (0,T)$.

The next result shows that if \berin $\rmF$ \erin satisfies the transversality conditions
then \berin $\calC(t)$ \erin is a discrete set for every $t\in (0,T)$.
\begin{theorem}
\label{prop:isolated} If \berin $\rmF$ \erin satisfies the transversality conditions
then for every $t \in (0,T)$ the set \berin $\calC(t)$ \erin only consists
of isolated points.
\end{theorem}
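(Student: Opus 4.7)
The plan is to fix $t\in (0,T)$ and an arbitrary $x_0\in \calC(t)$, and show that $x_0$ is isolated in $\calC(t)$. There are two cases, according to whether the point $(x_0,t)$ is regular or singular.

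If $(x_0,t)\notin \calS$, then $\rmD_x\rmF(x_0,t)$ is invertible; I would apply the Implicit Function Theorem to the partial map $x\mapsto \rmF(x,t)$ (at fixed $t$) to conclude that $x_0$ is the unique solution of $\rmF(x,t)=0$ in a neighborhood of $x_0$, hence isolated. The substantive case is $(x_0,t)\in\calS$. Here I would carry out a Lyapunov--Schmidt reduction based on (T1): pick $0\ne v\in \NN(\rmD_x\rmF(x_0,t))$ and $0\ne w^*\in \NN(\rmD_x\rmF(x_0,t)^*)$, decompose the domain as $\langle v\rangle \oplus V'$ with $V'$ a topological complement of $\langle v\rangle$, and the codomain as $\RR(\rmD_x\rmF(x_0,t))\oplus \langle w^*\rangle^\flat$ (using Fredholm index $0$ in the infinite-dimensional case). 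Writing $x=x_0+sv+y$ with $y\in V'$, the ``range'' component of the equation $\rmF(x,t)=0$ reads $P\rmF(x_0+sv+y,t)=0$, where $P$ is the projection onto $\RR(\rmD_x\rmF(x_0,t))$. Since $\rmD_y[P\rmF](x_0,0,t)=P\rmD_x\rmF(x_0,t)|_{V'}$ is an isomorphism onto $\RR(\rmD_x\rmF(x_0,t))$, the IFT provides a unique smooth solution $y=y(s,t)$, with $y(0,t)=0$.

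It remains the scalar bifurcation equation
\[
\phi(s,t)\;:=\;\bigl\langle \rmF(x_0+sv+y(s,t),t),\,w^*\bigr\rangle=0.
\]
Clearly $\phi(0,t)=0$. Differentiating $P\rmF(x_0+sv+y(s,t),t)=0$ in $s$ at $s=0$ and using $\rmD_x\rmF(x_0,t)[v]=0$ gives $P\rmD_x\rmF(x_0,t)[\partial_s y(0,t)]=0$, whence $\partial_s y(0,t)=0$ by injectivity of $P\rmD_x\rmF(x_0,t)|_{V'}$. Then
\[
\partial_s\phi(0,t)\;=\;\bigl\langle \rmD_x\rmF(x_0,t)[v],\,w^*\bigr\rangle\;=\;0,
\]
while, using $\partial_s y(0,t)=0$ and that $\rmD_x\rmF(x_0,t)[\partial_{ss}^2 y(0,t)]\in\RR(\rmD_x\rmF(x_0,t))$ is orthogonal to $w^*$,
\[
\partial_{ss}^2\phi(0,t)\;=\;\bigl\langle \rmD_x^2\rmF(x_0,t)[v,v],\,w^*\bigr\rangle\;\neq\;0
\]
by (T3). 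A second-order Taylor expansion then yields $\phi(s,t)=\tfrac12\partial_{ss}^2\phi(0,t)\,s^2+o(s^2)$, so $s=0$ is the only small solution of $\phi(\cdot,t)=0$, and correspondingly $x_0$ is the unique zero of $\rmF(\cdot,t)$ near $x_0$.

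The main technical point is setting up the Lyapunov--Schmidt reduction correctly and verifying that $\partial_s y(0,t)=0$, which is what lets (T3) feed directly into the nondegeneracy of $\partial_{ss}^2\phi(0,t)$. Note that (T2) does not enter the argument: it concerns the behaviour of $\calC$ as $t$ varies and is not needed for the discreteness of a single section $\calC(t)$.
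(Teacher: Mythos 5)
Your proof is correct, but it follows a genuinely different route from the paper. The paper works with the \emph{joint} variable $(x,t)$: it first uses (T1) and (T2) to show that the total differential $\rmd\rmF(x_0,t_0)$ is surjective with one-dimensional kernel, applies the implicit function theorem to parametrize $\calC$ near $(x_0,t_0)$ by a curve $\gamma(s)=(\sfx(s),\sft(s))$, and then, pairing the first and second derivatives of $\rmF(\gamma(s))=0$ with $w^*$, obtains $\dot\sft(0)=0$ (via (T2)) and $\ddot\sft(0)\neq 0$ (via (T3)), so that $\sft(s)\neq t_0$ for small $s\neq 0$ and $x_0$ is isolated in $\calC(t_0)$. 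You instead freeze $t$ and perform a Lyapunov--Schmidt reduction in $x$ alone: (T1) (together with the index-$0$ Fredholm property in the infinite-dimensional setting) provides the splitting, the range equation is solved by the implicit function theorem with $\partial_s y(0,t)=0$, and the scalar bifurcation function satisfies $\phi(0)=\partial_s\phi(0)=0$, $\partial_{ss}^2\phi(0)=\langle \rmD_x^2\rmF(x_0,t)[v,v],w^*\rangle\neq 0$ by (T3), so $s=0$ is an isolated zero. Your observation that (T2) never enters is accurate and is a genuine (small) strengthening: discreteness of each section $\calC(t)$ already follows from (T1) and (T3) alone, whereas in the paper (T2) is structurally necessary because the argument parametrizes the whole set $\calC$ rather than a single section. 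What the paper's approach buys in exchange is the local geometric picture of $\calC$ near a singular point --- a $\rmC^2$ curve tangent to the section $\{t=t_0\}$ with a quadratic (fold-type) turning in $t$, since $\dot\sft(0)=0$, $\ddot\sft(0)\neq0$ --- which is the structure exploited elsewhere (e.g.\ in the analytic variant and in the singular-perturbation application), while your reduction isolates exactly the hypotheses responsible for the topological conclusion about $\calC(t)$.
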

We present here the simple proof in the finite dimensional case,
postponing the discussion of the general case to the next section.
\begin{proof}
Let us fix \berin $x_0\in\calC(t_0)$. If $\rmD_x\rmF(x_0,t_0)$ \erin  is invertible,
then the thesis follows immediately by the implicit function theorem.
It is thus sufficient to consider the case  when $(x_0,t_0)\in\calS$.

 Let us first show that \berin $\NN(\rmd\rmF(x_0,t_0))$ has
dimension $1$ so that the differential of $\rmF$ \erin at
$(x_0,t_0)$ has rank $n$ and is surjective. If \berin
$(v,\alpha)\in\NN(\rmd\rmF(x_0,t_0))$, \erin then taking the duality
with $w^*$ as in \berin (T2) \erin we get
\begin{align*}
  0&=\langle\rmD_x\rmF(x_0,t_0)[v],w^*\rangle+ \alpha
  \langle\partial_t\rmF(x_0,t_0),w^*\rangle =
  \langle\rmD_x\rmF(x_0,t_0)^*[w^*],v\rangle+ \alpha
  \langle\partial_t\rmF(x_0,t_0),w^*\rangle
  \\&=\alpha
  \langle\partial_t\rmF(x_0,t_0),w^*\rangle,
\end{align*}
so that $\alpha=0$ by (T2). Choosing now an arbitrary vector $w^*\in
\R^n$ we get $v\in\NN(\rmD_x\rmF(x_0,t_0))$, which has dimension $1$
thanks to (T1).

Since 
the differential $\rmd\rmF(x_0,t_0)$ has rank $n$,
the implicit function theorem implies
that there exists an open neighborhood $U$ of $(x_0,t_0)$
and a smooth curve
\[
\gamma : (-\eps,\eps) \to \Oomega \times (0,T), \qquad s \mapsto
\gamma(s) =(\sfx(s),\sft(s)),
\]
such that
\begin{equation}\label{gamma-props}
\begin{gathered}
(\sfx(0),\sft(0)) = (x_0,t_0), \quad
\dot\gamma(s)\neq 0\quad \text{in }(-\eps,\eps),
\quad
\berin\calC\erin\cap U=\gamma((-\eps,\eps)).
\end{gathered}
\end{equation}
Differentiating w.r.t\  $s$ the identity
$\berin\rmF\erin(\sfx(s),\sft(s))=0$ we then obtain
\begin{equation}
\label{differentiation} \rmD_x \rmF (\sfx(s),\sft(s))[\dot{\sfx}(s)]
+ \dot{\sft}(s) \partial_t \rmF (\sfx(s),\sft(s)) =0 \quad \forall\,
s \in (-\eps,\eps).
\end{equation}
Multiplying \eqref{differentiation} at $s=0$  by $w^* $ we \rbe find
\rend
\[
\begin{aligned}
0&=\langle \rmD_x \rmF (x_0,t_0)[\dot{\sfx}(0)], w^* \rangle +
\dot{\sft}(0) \langle
\partial_t \rmF (x_0,t_0), w^* \rangle\\
 & = \langle \rmD_x \rmF (x_0,t_0)^*[ w^*], \dot{\sfx}(0) \rangle +
\dot{\sft}(0) \langle
\partial_t \rmF (x_0,t_0), w^* \rangle
=0+
\dot{\sft}(0) \langle
\partial_t \rmF (x_0,t_0), w^* \rangle.
\end{aligned}
\]
Since $\langle\partial_t \rmF (x_0,t_0), w^* \rangle \neq 0$, we then have
$\dot{\sft}(0) =0$, \berin which implies $\dot{\sfx}(0)\neq0$ by the second of (\ref{gamma-props}).
Evaluating (\ref{differentiation}) at $s=0$
we therefore \erin get $\rmD_x \rmF (x_0,t_0)[\dot{\sfx}(0)]=0$,
i.e.~$\dot\sfx(0)\in\berin\NN(\rmD_x\rmF(x_0,t_0))\erin$.

Differentiating w.r.t\ $s$ twice, we have
\[
\begin{aligned}
\rmD_x^2 \rmF (\sfx(s),\sft(s))[\dot{\sfx}(s)]^2  & + 2 \dot{\sft}(s) \partial_t \rmD_x \rmF (\sfx(s),\sft(s))[\dot{\sfx}(s)]
 +  \rmD_x \rmF (\sfx(s),\sft(s))[\ddot{\sfx}(s)]  \\ & + \dot{\sft}(s)^2 \partial_t^2 \rmF (\sfx(s),\sft(s))+ \ddot{\sft}(s) \partial_t\rmF (\sfx(s),\sft(s))=0\quad \forall\, s \in (-\eps,\eps).
\end{aligned}
\]
Multiplying the above relation at $s=0$ by $w^*$
and taking into account that
$\dot\sfx(0)\in\berin\NN(\rmD_x\rmF(x_0,t_0))\setminus \{0\}\erin$ and $\dot{\sft}(0) =0$, we then conclude
\[
\ddot{\sft}(0)  \langle \partial_t\rmF (x_0,t_0), w^* \rangle
=\langle \rmD_x^2 \rmF (x_0,t_0)[\dot\sfx (0),\dot \sfx(0) ], w^*
\rangle\neq 0,
\]
thanks to \berin (T3) from Definition \ref{ass:tran}. \erin
The above relation then gives
\[
\ddot{\sft}(0)  \neq0.
\]
This yields the existence of a neighborhood $V$
of $(x_0,t_0)$ such that  $V\cap\berin\calC\erin(t_0)$ contains only the point $x_0$,
since a simple Taylor expansion of $\sft$ around $s=0$ shows
that $\sft(s)\neq\berin t_0\erin$ in a sufficiently small neighborhood of $0$.
\end{proof}
%
%
%
%
\subsection*{Genericity of the transversality conditions}
\newcommand{\notilde}{\relax}
 As the results from \cite{zanini}, \cite{virginia}, and \cite{ars}
reveal their crucial role,
 it is interesting to investigate to what extent
the transversality conditions
can be assumed to hold for  ``generic'' vector fields.  In the realm of bifurcation theory,
this is commonly accepted, see e.g.\ \cite{guck-holmes, haragus,
  vanderbauwhede}.
Nonetheless,  it is not always easy to find, in the literature, an
explicit result stating the precise meaning of genericity,
especially in the infinite-dimensional setting (see \cite{Sot} for a
\emph{finite-dimensional} version of the genericity result).

We address this issue
in Section \ref{s:3}. Its main result, Theorem \ref{thm:genericity_infty}, states that
 a certain class of perturbations of $\rmF$
 satisfy the transversality conditions up to a \emph{meagre} subset
(or, in finite dimension, up to a Lebesgue negligible set).
 The key idea at the basis of our proof is that the
 transversality conditions 
 are equivalent to the property
 that the ``augmented" operator 
 \begin{equation}
 \label{aug-g}
\notilde\rmG: \Oomega \times (0,T) \times  (\X\setminus \{0\})  \to
\Y \times \Y \qquad \rmG(x,t,v) := (\notilde\rmF(x,t), \rmD_x
\notilde\rmF(x,t)[v]),
\end{equation}
has  $(0,0)$  as a regular value, namely that  the total differential $\dd \notilde g$ is surjective at each point of
$\notilde g^{-1} (0,0)$. This is proved in Lemma \ref{l:G-(4)} below. We will  combine this fact
with a classical result from the paper \cite{Saut-Temam79}. Theorem 1.1 therein
indeed provides
 conditions
ensuring that,
 up to a meagre set, the perturbations of a given operator have $(0,0)$ as a regular value.

 \rbe Our finite-dimensional genericity result reads \rend
%
%
%
\begin{theorem}\label{thm:genericity_finite}
Let $\rmF \in\rmC^3 (\Oomega \times (0,T); \R^n)$. There exists a
meagre (or with zero Lebesgue measure) set $N\subset
\R^n\times\mathbb{M}^{n \times n}$ such that for every $(y,K)\in(
\R^n\times\mathbb{M}^{n \times n})\setminus\berin N\erin$
 the map
\begin{equation}\label{eq:perturbation_F-finite}
\tilde\rmF:\Oomega\times(0,T) \to \R^n,
\qquad \tilde\rmF(x,t):= \rmF(x,t)+y+ K[x]
\end{equation}
satisfies the transversality conditions \berin of Definition \ref{ass:tran}.\erin
\end{theorem}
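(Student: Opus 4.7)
My strategy follows the program sketched in the paragraph preceding the statement: rephrase the transversality conditions as a regular-value condition on an augmented map, and then invoke a parametric Sard theorem for the family of perturbations. The main obstacle, which is in fact mild, is the surjectivity check of the partial differential of the augmented map in the perturbation parameters $(y,K)$; once that is in hand, everything else is routine bookkeeping.

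By Lemma \ref{l:G-(4)}, the map $\tilde\rmF$ satisfies the transversality conditions of Definition \ref{ass:tran} if and only if $(0,0)\in\R^n\times\R^n$ is a regular value of the augmented map
$$\tilde\rmG:\Oomega\times(0,T)\times(\R^n\setminus\{0\})\to\R^n\times\R^n,\qquad \tilde\rmG(x,t,v):=\bigl(\tilde\rmF(x,t),\,\rmD_x\tilde\rmF(x,t)[v]\bigr),$$
where $\rmD_x\tilde\rmF=\rmD_x\rmF+K$ and $\partial_t\tilde\rmF=\partial_t\rmF$. It therefore suffices to produce a set $N\subset\R^n\times\mathbb{M}^{n\times n}$ that is both meagre and Lebesgue-negligible, outside of which $(0,0)$ is a regular value of the corresponding $\tilde\rmG$.

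To this end, I would introduce the parameter-encompassing map
$$\Psi:\Oomega\times(0,T)\times(\R^n\setminus\{0\})\times\R^n\times\mathbb{M}^{n\times n}\longrightarrow\R^n\times\R^n,$$
$$\Psi(x,t,v,y,K):=\bigl(\rmF(x,t)+y+K[x],\;(\rmD_x\rmF(x,t)+K)[v]\bigr),$$
and show that $(0,0)$ is a regular value of $\Psi$. For this it is enough to check surjectivity of the partial differential in the parameters $(y,K)$: in a direction $(\delta y,H)\in\R^n\times\mathbb{M}^{n\times n}$ this differential equals $(\delta y+H[x],\,H[v])$, and given any target $(a,b)\in\R^n\times\R^n$ the hypothesis $v\neq 0$ lets us choose a matrix $H$ with $H[v]=b$ and then set $\delta y=a-H[x]$. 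Since $\rmF\in\rmC^3$, the map $\Psi$ is of class $\rmC^2$ and $\Psi^{-1}(0,0)$ is a $\rmC^2$ submanifold of codimension $2n$.

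Finally, I would apply Sard's theorem to the $\rmC^2$ projection $\pi:\Psi^{-1}(0,0)\to\R^n\times\mathbb{M}^{n\times n}$. A routine linear-algebra argument (standard in the parametric transversality circle of ideas, and packaged in the infinite-dimensional setting as Theorem~1.1 of \cite{Saut-Temam79}) identifies the critical values of $\pi$ with the set $N$ of parameters $(y,K)$ for which $(0,0)$ fails to be a regular value of $\tilde\rmG$. The dimension gap $(1+n+n^2)-(n+n^2)=1$ leaves $\rmC^2$ regularity amply sufficient for Sard, so $N$ has Lebesgue measure zero; since the critical set of $\pi$ is $\sigma$-compact and $\pi$ is continuous, $N$ is also a countable union of compact, measure-zero, hence nowhere dense sets, and is therefore meagre as well. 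Combining with the reduction of the second paragraph yields the theorem.
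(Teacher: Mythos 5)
Your proposal is correct and follows essentially the same route as the paper: reduce via Lemma \ref{l:G-(4)} to the statement that $(0,0)$ is a regular value of the augmented map, pass to the parametrized map $\Psi=\mathcal G$ on $\Oomega\times(0,T)\times(\R^n\setminus\{0\})\times\R^n\times\mathbb M^{n\times n}$, verify surjectivity in the $(y,K)$-directions (exactly the check the paper performs through condition (J2) and Proposition \ref{analogue-1-true}), and apply classical Sard's theorem to the projection onto the parameters with the same dimension count $n^2+n+1-(n+n^2)=1<2$. Your explicit verification of the parameter-surjectivity and of the meagreness of $N$ via $\sigma$-compactness are details the paper delegates to \cite{Saut-Temam79}, but the argument is the same.
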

The proof of this result, based on the classical Sard's theorem,
and the motivation for the enhanced regularity requirement on $f$,
are postponed to \berin Section \ref{ss:4.1}, Remark \ref{rem:fdimproof}. \erin


\paragraph{\bf Plan of the paper.}  In Section \rbe \ref{ss:1.4}, we analyze the transversality conditions in
the setting of an infinite-dimensional Banach space $\X$. We prove
that they imply that the zeroes of $f$ are isolated (cf.\ Thm.\
 \ref{prop:isolated_infty}).
In Section \ref{ss:2.2} \rend we also obtain the characterization of
the transversality conditions in terms of the operator $g$
\eqref{aug-g}, which is the milestone for our genericity result,
Theorem \ref{thm:genericity_infty},  proved in Sec.\ \ref{ss:4.1}.
Finally, in Section \ref{ss:4.2}, in view of the application to the
 singular perturbation problem \rbe  \eqref{gflow-intro}--\eqref{eq:10} \rend
 we specialize the discussion and our results to the case when
$f=\rmD_x \mathscr{E}$, with  $\mathscr{E} : \X \times (0,T) \to \R$ a smooth energy functional.

\section{The transversality conditions in the infinite-dimensional case}
\label{ss:1.4}

\noindent
In the infinite-dimensional context, it is natural to study the  transversality conditions
for the class of vector fields $\rmF: \Oomega \times (0,T) \to \Y$ which
are \emph{Fredholm maps} of index $0$ between two Banach spaces $\X,\Y$
(see below for a definition).
We prove Lemma \ref{l:crucial-infinite-dim} which
provides a characterization of the first two transversality conditions in terms of the surjectivity of the operator \berin $\dd \rmF$. \erin
We rely on this in the proof of the main result of this section, viz.\ Theorem \ref{prop:isolated_infty},
which states that the transversality conditions imply that \berin  the  zeroes \erin of $f$  are isolated.
Finally, in Section \ref{ss:2.2}, we obtain a characterization of the \emph{full} set of
transversality conditions in Lemma \ref{l:G-(4)}.
We will exploit this result in order to investigate their genericity in Section \ref{s:3}.

\subsection{Assumptions and preliminary results}
\label{s:prelims}

\paragraph{\bf Notation and preliminary definitions.}
Let $\X$ be a Banach space;
we shall denote by
$\| \cdot \|_\X$ its norm, by $\X^*$ its dual,
and by $\pairing{\X^*}{\X}{\cdot}{\cdot}$
the duality pairing between \berin$\X^*$ and $\X$\erin.

Given two Banach spaces $\X$ and $\Y$,
we denote by $\mathcal{L}(\X;\Y)$ the space of linear bounded
operators from $\X$ to $\Y$ and by $\mathcal L^k(\X;\Y)$ the
space of the continuous $k$-linear forms from $\X^k$ to $\Y$.
If $L\in \mathcal L(\X;\Y)$ we denote by $L^*$ its adjoint operator
in $\berin\mathcal L\erin(\Y^*;\X^*)$.
We also set
\begin{equation}
  \label{eq:4}
  \text{kernel of $L$}:\
  \NN(L)=\big\{x\in \X:Lx=0\big\},\quad
  \text{range of $L$}:\
  \RR(L)=\big\{Lx:x\in \X\big\}.
\end{equation}
A subspace $\mathsf R\subset \Y$ has finite codimension
if there exists a finite-dimensional space $\mathsf S\subset \Y$ such
that $\mathsf R+\mathsf S=\mathsf Y$.
In this case $\mathsf R$ is closed and
we can always choose $\mathsf S$ so that $\mathsf S\cap
\mathsf R=\{0\}$; the codimension of $\mathsf R$ is then defined as
$\mathrm{codim}(\mathsf R):=\mathrm{dim}(\mathsf S)$.
\berin Moreover, $\RR(L^*)$ is closed \erin
and we have the adjoint relations
\begin{equation}
  \label{eq:5}
  \RR(L)^\perp=\NN(L^*),\qquad
  \NN(L)^\perp=\RR(L^*).
\end{equation}
We recall that $L\in\mathcal L(\X;\Y)$
is a \emph{Fredholm operator} if
\[
\mathrm{dim}(\NN( L)),\ \mathrm{codim}(\RR(L))\quad\text{ are finite.}
\]
In particular the range of a Fredholm operator is closed.
%
The \emph{index} of the operator $L$ is defined as
\begin{equation}
  \label{eq:6}
  \ind(L):=\berin\dim(\NN(L))\erin-\codim(\RR(L)).
\end{equation}
For Fredholm operators \eqref{eq:5} yields
\begin{equation}
  \label{eq:7}
  \ind(L)=0\quad\Longleftrightarrow\quad
  \dim(\NN(L))=\dim(\NN(L^*)),\quad
  \codim(\RR(L))=\codim(\RR(L^*)).
\end{equation}
The stability theorem shows that the collection of all Fredholm
operators is open in $\mathcal L(\X;\Y)$ and the index $\ind$ is a locally
constant function.
Let $\Oomega\subset \X$ be a connected open subset of $\X$.
A map $f\in \rmC^1(\Oomega;\Y)$ is a Fredholm map if for every
$x\in \Oomega$ the differential
 $\rmD f(x)\in \mathcal L(\X,\Y)$ is a Fredholm operator.
 The \emph{index} of $f$ is defined as the index of $\rmD f(x)$
 for some $x\in \Oomega$. By the stability theorem
and the connectedness of $\Oomega$
this definition is independent of $x$.

In this infinite-dimensional context, our basic assumption on the vector field
$ \rmF : \Oomega \times (0,T) \to \Y$ is the following.
\begin{assumption}
\label{F_ass1_infty} We require that $ \rmF \in \rmC^2   (\Oomega \times
(0,T); \Y)$, and that
\begin{equation}
\label{fredhom-zero} \text{$\rmF(\cdot,t): \Oomega \to \Y$ is a Fredholm map of
index $0$ for every $t\in(0,T)$. }
\end{equation}
\end{assumption}
We shall denote by $\rmD_x \rmF : \Oomega \times (0,T) \to
\mathcal{L} (\X; \Y)$ and by $\partial_t \rmF : \Oomega \times (0,T)
\to \Y$ the partial G\^{a}teau derivatives of $\rmF$,  whereas $\rmd
f$ is the total differential of $f$. As usual, the second order
(partial) differential $\rmD^2_x f:\Oomega\times (0,T)\to \mathcal
L^2(\X;\Y)$ \rbe at  a point $(x,t)$  \rend is identified with its canonical  bilinear form. As in
the previous section we set
\begin{equation}
  \label{eq:8}
  \calC:=\Big\{(x,t)\in \Oomega\times (0,T):
  \rmF(x,t)=0\Big\},
  \quad
  \calS:=\Big\{(x,t)\in \calC:\rmD_x\rmF(x,t)\ \text{is not
    invertible}\Big\},
\end{equation}
and we denote by $\calC(t)$ and $\calS(t)$ their sections at the time
$t\in (0,T)$.
Let us now state the  infinite-dimensional
version of \berin Definition \erin \ref{ass:tran}.
\begin{definition}[Transversality conditions in the infinite-dimensional case]
  \label{ass:tran-infinity}  Let $\rmF$ comply with Assumption
  \ref{F_ass1_infty}.
  We say that \berin $\rmF$ satisfies at a point $(x_0,t_0) \in \calS$
 the transversality conditions if\erin
\begin{enumerate}[\rm (T1)]
\item
  $\mathrm{dim}(\NN (\rmD_x \rmF (x_0,t_0)))=1$;
\item
  If $0\neq \ell^*\in \NN(\rmD_x \rmF(x_0,t_0)^*)$
  then $\berin\pairing{\Y^*}{\Y}{\ell^{*}}{\partial_t \rmF (x_0,t_0)}\erin\neq 0$;
\item
  If $0\neq v\in \NN(\rmD_x \rmF(x_0,t_0))$ then
  $\berin\pairing{\Y^*}{\Y}{\ell^{*}}{\rmD_x^2 \rmF (x_0,t_0)[v,v] }\erin\neq 0$.
\end{enumerate}
If the above properties hold at every $(x_0,t_0)\in\calS$,
we say that $\rmF$ satisfies the transversality conditions.
\end{definition}

\begin{remark}\label{rmk:index_0_index_1}
{\rm
Thanks to \berin \eqref{eq:7} and \eqref{fredhom-zero}\erin,
the above condition \berin (T1) \erin yields that $\NN (\rmD_x \rmF(x_0,t_0)^*)$
has also dimension $1$.
Note that Assumption \ref{F_ass1_infty} implies that  $\rmF: \Oomega \times (0,T) \to \Y $  is a Fredholm map of
index $1$ \berin with respect to the variable $(x,t)$. Indeed, \erin
\begin{equation}
  \label{eq:14}
  \rmd \rmF(x,t)[v,\tau]=\rmD_x \rmF(x,t)[v]+\tau\partial_t \rmF(x,t);
\end{equation}
if $\partial_t\rmF(x,t)\in \RR\big(\rmD_x\rmF(x,t)\big)$ then
$\codim\big(\RR(\rmd\rmF(x,t))\big)=
\codim\big(\RR(\rmD_x\rmF(x,t))\big)$ but
$\dim\big(\NN(\rmd\rmF(x,t))\big)=\dim\big(\NN(\rmD_x\rmF(x,t))\big)+1$.
On the other hand, if
$\partial_t\rmF(x,t)\not\in \RR\big(\rmD_x\berin\rmF\erin(x,t)\big) $
then
$\codim\big(\RR(\rmd\rmF(x,t))\big)=
\codim\big(\RR(\rmD_x\rmF(x,t))\big)-1$ and
$\dim\big(\NN(\rmd\rmF(x,t))\big)=\dim\big(\NN(\rmD_x\rmF(x,t))\big)$.
}
\end{remark}

Let us now get further insight into the transversality conditions \berin (T1) and (T2). \erin
\begin{lemma}
\label{l:crucial-infinite-dim}
Suppose that $\rmF : \Oomega \times (0,T) \to \Y$ satisfies Assumption \ref{F_ass1_infty} and let
 $(x_0,t_0)\in\calS$ be fixed.
Then conditions
\berin$(T1)$ and $(T2)$ of Definition \ref{ass:tran-infinity} \erin  hold
if and only if $\rmd \rmF (x_0,t_0)$ is onto.
\end{lemma}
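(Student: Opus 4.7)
The plan is to unpack the definition of $\rmd \rmF(x_0,t_0)$ via \eqref{eq:14}, which gives the key decomposition
\[
\RR(\rmd \rmF(x_0,t_0)) \;=\; \RR(\rmD_x \rmF(x_0,t_0))\;+\;\mathrm{span}\bigl(\partial_t \rmF(x_0,t_0)\bigr).
\]
The adjoint relation $\NN(\rmD_x \rmF(x_0,t_0)^*)=\RR(\rmD_x \rmF(x_0,t_0))^\perp$ from \eqref{eq:5} then translates (T2) into the geometric statement that $\partial_t \rmF(x_0,t_0)\notin \RR(\rmD_x \rmF(x_0,t_0))$, provided $\NN(\rmD_x \rmF(x_0,t_0)^*)$ is one-dimensional. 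Combined with Assumption \ref{F_ass1_infty} (Fredholm of index $0$ in $x$), which through \eqref{eq:7} ties $\dim \NN(\rmD_x \rmF(x_0,t_0))$ to $\codim \RR(\rmD_x \rmF(x_0,t_0))$, the lemma will become essentially a bookkeeping exercise on codimensions.

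For the ``only if'' implication, I assume (T1) and (T2). Condition (T1) together with the Fredholm index $0$ property gives $\codim \RR(\rmD_x \rmF(x_0,t_0))=\dim\NN(\rmD_x \rmF(x_0,t_0)^*)=1$. Hence, via the duality identity, (T2) amounts to $\partial_t \rmF(x_0,t_0)\notin \RR(\rmD_x \rmF(x_0,t_0))$. Adding to the range a vector not contained in it decreases the codimension by exactly one, so the above decomposition shows that $\RR(\rmd \rmF(x_0,t_0))=\Y$.

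For the ``if'' implication, I assume $\rmd \rmF(x_0,t_0)$ surjective. Since $(x_0,t_0)\in\calS$, $\rmD_x \rmF(x_0,t_0)$ is non-invertible and, being Fredholm of index $0$, is not surjective either, so $\codim \RR(\rmD_x \rmF(x_0,t_0))\ge 1$. The one-dimensional enlargement in the decomposition can lower the codimension by at most one, so surjectivity of $\rmd \rmF(x_0,t_0)$ forces both $\codim \RR(\rmD_x \rmF(x_0,t_0))=1$ and $\partial_t \rmF(x_0,t_0)\notin \RR(\rmD_x \rmF(x_0,t_0))$. The first equality, via \eqref{eq:7}, yields $\dim \NN(\rmD_x \rmF(x_0,t_0))=1$, which is (T1) and also gives $\dim \NN(\rmD_x \rmF(x_0,t_0)^*)=1$; the second condition is then exactly the non-vanishing pairing required in (T2) for any generator of the one-dimensional space $\NN(\rmD_x \rmF(x_0,t_0)^*)$.

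There is no real obstacle; the only delicate point is to carefully distinguish, as in Remark \ref{rmk:index_0_index_1}, the two possible alternatives (whether or not $\partial_t \rmF(x_0,t_0)$ belongs to $\RR(\rmD_x \rmF(x_0,t_0))$) and to make sure that the codimension computation and the passage from $\RR(\rmD_x \rmF)^\perp$ to $\NN(\rmD_x \rmF^*)$ consistently exploit \eqref{eq:5} and the Fredholm, index $0$ assumption.
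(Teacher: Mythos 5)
Your argument is correct, and it reaches the equivalence by a genuinely different bookkeeping than the paper's proof. The paper works with kernels and explicit duality pairings: for the implication ``onto $\Rightarrow$ (T1)--(T2)'' it deduces (T1) from the inclusion $\NN(\rmD_x\rmF(x_0,t_0))\times\{0\}\subseteq\NN(\rmd\rmF(x_0,t_0))$ combined with the index-$1$ Fredholm property of the joint differential established in Remark \ref{rmk:index_0_index_1}, and it obtains (T2) by picking $\xi\in\Y$ with $\langle\ell^*,\xi\rangle\neq 0$, writing $\xi=\rmD_x\rmF(x_0,t_0)[v]+\tau\partial_t\rmF(x_0,t_0)$ and pairing; for the converse it shows that any $\ell^*\in\RR(\rmd\rmF(x_0,t_0))^\perp$ lies in $\NN(\rmD_x\rmF(x_0,t_0)^*)$ and is then annihilated by (T2). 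You instead stay entirely on the range side: the decomposition $\RR(\rmd\rmF)=\RR(\rmD_x\rmF)+\mathrm{span}(\partial_t\rmF)$, the reformulation of (T2) as $\partial_t\rmF(x_0,t_0)\notin\RR(\rmD_x\rmF(x_0,t_0))$ (legitimate as you note once the annihilator is one-dimensional, and for the easy implication it suffices that it is nontrivial, which follows from $(x_0,t_0)\in\calS$ and index $0$), and elementary codimension arithmetic for finite-codimensional subspaces. What your route buys is that the Fredholm index of the full map $\rmd\rmF$ is never needed --- only the index-$0$ property of $\rmD_x\rmF$ --- and it makes the dichotomy of Remark \ref{rmk:index_0_index_1} (whether $\partial_t\rmF$ lies in $\RR(\rmD_x\rmF)$ or not) the explicit pivot; the paper's pairing computations, on the other hand, are the ones that recur almost verbatim in the proof of Lemma \ref{l:G-(4)}, so they set up the later argument more directly. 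One step you should make explicit: when converting ``$\partial_t\rmF(x_0,t_0)\notin\RR(\rmD_x\rmF(x_0,t_0))$'' back into the nonvanishing pairing of (T2), either invoke closedness of the range (it is Fredholm) together with Hahn--Banach, or observe directly that a generator $w^*$ of $\RR(\rmD_x\rmF(x_0,t_0))^\perp$ with $\langle w^*,\partial_t\rmF(x_0,t_0)\rangle=0$ would annihilate all of $\RR(\rmd\rmF(x_0,t_0))=\Y$ and hence vanish, a contradiction.
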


\begin{proof}
Let us suppose that $\rmd\rmF(x_0,t_0)$ is onto.
The inequality $\mathrm{dim}(\NN(\rmD_x \rmF (x_0,t_0))) \leq 1$
follows immediately by Remark \ref{rmk:index_0_index_1} and
\berin from the fact that $\codim\big(\rmd\rmF(x_0,t_0)\big)=0$, because
$\NN\big(\rmD_x\rmF(x_0,t_0)\big){\times}\{0\}\subseteq
\NN\big(\rmd\rmF(x_0,t_0)$\big). \erin
Since $(x_0,t_0)\in\calS$, we conclude that (T1) holds.

Suppose now that $0\neq \ell^*\in \NN\big(\rmD_x \rmF
(x_0,t_0))^*\big) $ and let $\xi \in \Y $ \berin be such that $\pairing{\Y^*}{\Y}{ \ell^*}{ \xi}\neq 0$.\erin
Since $\rmd\rmF(x_0,t_0)$ is onto, there exists $(v,\tau) \in \X\times \R$ such that
\[
\xi  = \rmD_x \rmF (x_0,t_0)[v] + \tau \partial_ t\rmF (x_0,t_0).
\]
Therefore
\begin{equation}
\label{ci-serve-infty}
\begin{aligned}
0\neq  &\berin\pairing{\Y^*}{\Y}{ \ell^*}{\xi}=
\pairing{\Y^*}{\Y}{\ell^*}{ \rmD_x \rmF(x_0,t_0)[v]} +
\tau\pairing{\Y^*}{\Y}{\ell^*}{\partial_ t\rmF(x_0,t_0)}\erin\\
&\quad = \berin\pairing{\X^*}{\X}{ \rmD_x \rmF (x_0,t_0)^*[\ell^*] }{v} +
\tau\pairing{\Y^*}{\Y}{\ell^*}{\partial_ t\rmF (x_0,t_0) } = 0+
\tau\pairing{\Y^*}{\Y}{\ell^*}{\partial_ t\rmF (x_0,t_0) },\erin
\end{aligned}
\end{equation}
so that $\berin\pairing{\Y^*}{\Y}{\ell^*}{\partial_ t\rmF (x_0,t_0) }\neq0\erin$.

In order to prove the converse implication,
let us suppose that (T1-2) hold
and let $\ell^*\in \RR\big(\rmd
F(x_0,t_0)\big)^\perp\subset \Y^*.$ Since for every $v\in \X$ and
$\tau\in \R$
\begin{displaymath}
 \berin\pairing{\Y^*}{\Y}{\ell^*}{\rmD_x\rmF(x_0,t_0)[v]+\tau\partial_t\rmF(x_0,t_0)}=0\erin,
\end{displaymath}
by choosing $\tau=0$
we immediately get $\ell^*\in \RR(\rmD_x\rmF(x_0,t_0))^\perp=\NN(\rmD_x
\rmF(x_0,t_0)^*)$;
property (T2) then yields $\ell^*=0$,
so that $\rmd\rmF(x_0,t_0)$ is onto.
\end{proof}

\subsection{The transversality conditions imply that the zeroes are isolated}
\label{ss:3.2}

\noindent
We are now in the position to  state and prove  the  analogue of Theorem \ref{prop:isolated}:

\begin{theorem}\label{prop:isolated_infty}
Suppose that $\rmF$ satisfies Assumptions \ref{F_ass1_infty} and \berin the transversality conditions
of Definition  \ref{ass:tran-infinity}. \erin
Then, for every $t\in(0,T)$ the set
$
\mathcal C(t):=\left\{x\in \X\,:\,\rmF(x,t)=0\right\}
$
consists of isolated points.
\end{theorem}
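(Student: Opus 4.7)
The plan is to follow the same overall strategy as in the finite-dimensional proof of Theorem \ref{prop:isolated}, but replacing the classical rank argument with the characterization furnished by Lemma \ref{l:crucial-infinite-dim} and an implicit function theorem in Banach spaces adapted to the Fredholm setting.

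First I would fix $t_0\in(0,T)$ and $x_0\in\calC(t_0)$, and dispose of the regular case: if $\rmD_x\rmF(x_0,t_0)$ is invertible (which, by Assumption \ref{F_ass1_infty}, just means $\NN(\rmD_x\rmF(x_0,t_0))=\{0\}$), the classical implicit function theorem applied to $\rmF(\cdot,t_0)$ furnishes a neighborhood of $x_0$ containing no other point of $\calC(t_0)$. It therefore suffices to treat the case $(x_0,t_0)\in\calS$.

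In this singular case I would invoke Lemma \ref{l:crucial-infinite-dim} to deduce that $\rmd\rmF(x_0,t_0):\X\times\R\to\Y$ is surjective, while Remark \ref{rmk:index_0_index_1} guarantees that $\rmd\rmF(x_0,t_0)$ is Fredholm of index $1$ and hence has $1$-dimensional kernel. Let $(v_0,\tau_0)$ span $\NN(\rmd\rmF(x_0,t_0))$; since this kernel is finite dimensional it admits a topological complement $W\subset\X\times\R$, and the restriction of $\rmd\rmF(x_0,t_0)$ to $W$ is a Banach-space isomorphism onto $\Y$. The classical implicit function theorem in Banach spaces, applied after splitting $\X\times\R=\mathrm{span}\{(v_0,\tau_0)\}\oplus W$, then produces a $\rmC^2$ curve
\[
\gamma:(-\eps,\eps)\to\Oomega\times(0,T),\qquad s\mapsto\gamma(s)=(\sfx(s),\sft(s)),
\]
with $\gamma(0)=(x_0,t_0)$, $\dot\gamma(0)=(v_0,\tau_0)\neq 0$, and $\calC\cap U=\gamma((-\eps,\eps))$ for a suitable open neighborhood $U$ of $(x_0,t_0)$, in complete analogy with \eqref{gamma-props}.

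From this point I would argue exactly as in the finite-dimensional proof. Pick $\ell^*\in\NN(\rmD_x\rmF(x_0,t_0)^*)\setminus\{0\}$ (nontrivial by \eqref{eq:7} and (T1), cf.\ Remark \ref{rmk:index_0_index_1}). Differentiating the identity $\rmF(\sfx(s),\sft(s))=0$ once in $s$, evaluating at $s=0$, and pairing with $\ell^*$ yields
\[
0=\pairing{\Y^*}{\Y}{\ell^*}{\rmD_x\rmF(x_0,t_0)[\dot\sfx(0)]}+\dot\sft(0)\,\pairing{\Y^*}{\Y}{\ell^*}{\partial_t\rmF(x_0,t_0)}=\dot\sft(0)\,\pairing{\Y^*}{\Y}{\ell^*}{\partial_t\rmF(x_0,t_0)},
\]
because $\ell^*\in\NN(\rmD_x\rmF(x_0,t_0)^*)$. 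Condition (T2) then forces $\dot\sft(0)=0$, whence $\dot\sfx(0)\neq 0$ spans $\NN(\rmD_x\rmF(x_0,t_0))$. Differentiating a second time, evaluating at $s=0$, pairing with $\ell^*$ and using $\dot\sft(0)=0$ yields
\[
\ddot\sft(0)\,\pairing{\Y^*}{\Y}{\ell^*}{\partial_t\rmF(x_0,t_0)}=\pairing{\Y^*}{\Y}{\ell^*}{\rmD_x^2\rmF(x_0,t_0)[\dot\sfx(0),\dot\sfx(0)]},
\]
and the right-hand side is nonzero by (T3) while the coefficient of $\ddot\sft(0)$ on the left is nonzero by (T2). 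Hence $\ddot\sft(0)\neq 0$, and a Taylor expansion $\sft(s)=t_0+\tfrac12\ddot\sft(0)s^2+o(s^2)$ shows that $\sft(s)\neq t_0$ for $0<|s|<\eps'$ sufficiently small, so $U'\cap\calC(t_0)=\{x_0\}$ in a smaller neighborhood $U'$ of $(x_0,t_0)$.

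The only step requiring genuine care beyond the finite-dimensional case is the local parametrization of $\calC$ by a $\rmC^2$ curve: I expect this to be the main (mild) obstacle, since one must explicitly exploit that $\NN(\rmd\rmF(x_0,t_0))$ is finite dimensional and hence topologically complemented, in order to reduce the equation $\rmF=0$ near $(x_0,t_0)$ to a standard implicit function problem in Banach spaces. Everything else is a direct transcription of the finite-dimensional computation in terms of the duality pairing $\pairing{\Y^*}{\Y}{\cdot}{\cdot}$.
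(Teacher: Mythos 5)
Your proof is correct and follows essentially the same route as the paper: the regular case via invertibility of the index-$0$ Fredholm operator, then Lemma \ref{l:crucial-infinite-dim} plus the implicit function theorem to parametrize $\calC$ locally by a $\rmC^2$ curve through $(x_0,t_0)$, and finally the identical duality computations with (T2) and (T3) giving $\dot\sft(0)=0$, $\ddot\sft(0)\neq 0$. The only (harmless) variation is in the reduction step: you complement the one-dimensional kernel of the total differential $\rmd\rmF(x_0,t_0)$ inside $\X\times\R$ and use surjectivity together with the index to get an isomorphism on the complement, whereas the paper splits $\X=\Z\oplus\NN(\rmD_x\rmF(x_0,t_0))$ and checks directly (via (T2)) that $\rmD_{(z,t)}\tilde\rmF(z_0,1,t_0)$ is invertible; both yield the same local curve and the same conclusion.
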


\noindent
The proof follows the same outline as for Theorem \ref{prop:isolated}.
Indeed, we first of all observe that, for $t_0 \in (0,T)$ fixed,  \berin \emph{non-singular} \erin  points $x$
(i.e., such that  $\berin\NN\erin(\rmD_x\rmF(x,t_0))$ is trivial) are isolated.
In the \berin singular \erin case, after some preliminary discussion  we again apply the implicit function theorem
in order to deduce that, in  a neighborhood of  $(x_0,t_0)$ with $x_0$  \berin (singular) zero, \erin
the zero set of $\rmF$ is the graph of a suitable function.
This  allows us to exploit the transversality conditions and deduce by the same arguments
as in the finite-dimensional case, that $x_0$ is isolated.

\begin{proof}
 Let us fix $(x_0,t_0)\in\calC$ and consider first the case when
$\NN(\rmD_x \rmF (x_0,t_0))=\{0\}$.
Since $\rmD_x\rmF(x_0,t_0)$ is a Fredholm operator of index $0$,
$\rmD_x\rmF(x_0,t_0)$ is invertible
and we can apply  the infinite-dimensional version of the implicit
function theorem (see e.g.\ \cite[Theorem 2.3]{Amb_Pro})
and conclude that $x_0$ is isolated in $\calC(t_0)$.

Now suppose that $\NN_0:=\NN\big(\rmD_x \rmF (x_0,t_0)\big)$ is
non-trivial. Then, by \berin (T1) of Definition \ref{ass:tran-infinity}, \erin
$\NN_0=\mathrm{span}(n_0)$ for some $n_0 \in \X\setminus\{0\}$,
so that we can write $\X=\Z+\NN_0$, where
$\Z$ is the
topological supplement of $\NN_0$ in $\X$. Thus, every $x\in \X$
can be uniquely expressed as $x=z+sn_0 $, with $z\in
\Z$ and $s\in \R$.
In particular, it is not restrictive to assume $x_0=z_0+n_0$, $z_0\in \Z$.

Let us now consider the function $\tilde\rmF(z,s,t):=\berin\rmF\erin(z+sn_0,t)$
defined in a neighborhood of $(z_0,1,t_0)$ in $\Z\times \R\times \R$.
In analogy to the proof of Theorem \ref{prop:isolated}, we now show that
the differential of the function $(z,t)\to\tilde\rmF(z,1,t)$ at the point $(z_0,t_0)$, denoted by
$\rmD_{(z,t)}\tilde \rmF(z_0,1,t_0)$, is invertible.
Let us note first that by construction and Lemma \ref{l:crucial-infinite-dim}
\begin{equation}\label{su_Y}
\RR\big(\rmD_{(z,t)}\tilde \rmF(z_0,1,t_0)\big) =
\RR\big(\rmd\rmF(x_0,t_0)\big)=\Y.
\end{equation}
To check injectivity of $\rmD_{(z,t)}\tilde\rmF(z_0,1,t_0)$,
suppose that $(v,\tau)\in \NN\big(\rmD_{(z,t)}\tilde\rmF(z_0,1,t_0)\big)
\berin\subseteq\Z\times \R\erin$, i.e.
$$
0=\rmD_z\tilde \rmF(z_0,1,t_0)[v]+\tau \partial_t\tilde \rmF(x_0,1,t_0)=
\rmD_x\rmF(x_0,t_0)[v]+\tau\partial_t \rmF(x_0,t_0).
$$
Taking the duality with $\berin0\neq\erin\ell^*\in \NN\big(\rmD_x\rmF(x_0,t_0)^*\big)=
\RR\big(\rmD_x\rmF(x_0,t_0)\big)^\perp$ and recalling \berin (T2) \erin we get $\tau=0$.
Then $\rmD_x\rmF(x_0,t_0)[v]=0$ so that $v\in \Z\cap \NN_0=\{0\}$.
This concludes the proof of the
injectivity of $\rmD_{(z,t)}\tilde\rmF(z_0,1,t_0)$.

Since $\rmD_{(z,t)}\tilde \rmF(z_0,1,t_0)$ is invertible,
we can now apply the implicit function theorem:
there exists  a neighborhood  $U$ of $(x_0,t_0)$,
an open interval $I\ni 0$, and
a $\rmC^2$-curve $(\sfz,\sft):I \to \Z\times (0,T)$
such that setting $\gamma(s):=(\sfz(s)+(1+s)n_0,\sft(s))$ we have
\begin{equation}\label{funz_imp_infty}
  \gamma(0)=(x_0,t_0),\quad
  U\cap \calC=\gamma(I).
\end{equation}
Differentiating the identity $\rmF(\gamma(s))=0$ w.r.t. $s$ and evaluating at $s=0$ we obtain
\begin{equation}\label{diffe}
0=\rmD_x\rmF(x_0,t_0)[\dot\sfz(0)+n_0]+\dot\sft(0)\partial_t\rmF(x_0,t_0).
\end{equation}
Testing by
$0\neq \ell^*\in \NN\big(\rmD_x \rmF
(x_0,t_0)^*\big)=\RR\big(\rmD\rmF(x_0,t_0)\big)^\perp$ and
recalling \berin (T2) \erin as well as $\dot \sfz\in \Z$,
we then obtain
\begin{equation}\label{deriv_1_0}
\berin\dot\sft\erin(0)=0,\quad \dot\sfz(0)=0.
\end{equation}
A further differentiation
w.r.t. $s$ gives at $s=0$
\[
0=\rmD_x^2\rmF(x_0,t_0)[n_0,n_0]+\rmD_x\rmF(x_0,t_0)[\ddot\sfz(0)]
+\ddot\sft(0)\partial_t\rmF(x_0,t_0),
\]
also in view of (\ref{deriv_1_0}).
Testing the last expression by $\ell^*$ as before, we have
\[
0=\pairing\Y{\Y^*}{\rmD_x\rmF(x_0,t_0)[n_0,n_0]}{\ell^*}+\ddot\sft(0)\pairing\Y{\Y^*}{
\partial_t\rmF(x_0,t_0)}{ \ell^*}.
\]
From (T3) we deduce that $\berin\ddot\sft\erin(0)\neq 0$
and we conclude that
$0$ is the only solution of the equation $\sft(s)=
t_0$
in a small neighborhood of $0$, so that $x_0$ is an isolated point in
$\calC(t_0)$ by \eqref{funz_imp_infty}.
\end{proof}
 It is interesting to note that when $f$ is \emph{analytic} then
conditions (T1-T2) of Definition \ref{ass:tran-infinity} are still
sufficient to provide a useful property of $\calC(t)$.
\begin{theorem}
  Suppose that $f$ is an analytic map satisfying Assumptions {\upshape \ref{F_ass1_infty}} and
  {\upshape (T1-T2)} of Definition \ref{ass:tran-infinity}, and
  that any connected component of $\calC(t)$ is compact.
  Then the set $\calC(t)$ is the disjoint union of a discrete set
  and \rbe of \rend
  an analytic manifold of dimension $1$, whose connected components
  are compact curves. Moreover, for every connected curve $C\subset \calC(t)$
  there exist $\eps>0$ and an open neighborhood $V$
  such that $\calC(s)\cap V=\emptyset $ for every $s\in (0,T)$ with $0<|t-s|<\eps$.
\end{theorem}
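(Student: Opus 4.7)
The plan is to follow the outline of Theorem \ref{prop:isolated_infty}, exploiting the analyticity of $\rmF$ to compensate for the absence of condition (T3). Fix $t_0\in(0,T)$ and $x_0\in\calC(t_0)$. When $\rmD_x\rmF(x_0,t_0)$ is invertible, the analytic implicit function theorem produces a local analytic branch of $\calC$ through $(x_0,t_0)$ which is transverse to the slice $\Oomega\times\{t_0\}$, so $x_0$ is isolated in $\calC(t_0)$. When $(x_0,t_0)\in\calS$, conditions (T1)--(T2) together with Lemma \ref{l:crucial-infinite-dim} yield the surjectivity of $\rmd\rmF(x_0,t_0)$; reproducing the construction in the proof of Theorem \ref{prop:isolated_infty} in the analytic category, we obtain a splitting $\X=\Z\oplus\mathrm{span}(n_0)$ with $\NN(\rmD_x\rmF(x_0,t_0))=\mathrm{span}(n_0)$, an open neighborhood $U$ of $(x_0,t_0)$, an open interval $I\ni 0$, and an analytic curve
\[
\gamma(s)=(\sfz(s)+(1+s)n_0,\sft(s)),\qquad s\in I,
\]
with $\gamma(0)=(x_0,t_0)$, $\calC\cap U=\gamma(I)$, and $\dot\sfz(0)=0$, $\dot\sft(0)=0$.

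The key new ingredient is a dichotomy for the real-analytic scalar function $\psi(s):=\sft(s)-t_0$: either $\psi$ vanishes identically on a neighborhood of $0$ (Case B), or $0$ is an isolated zero of $\psi$ (Case A). In Case A, the only $s$ near $0$ with $\gamma(s)\in\Oomega\times\{t_0\}$ is $s=0$, and the relation $\calC\cap U=\gamma(I)$ forces $x_0$ to be isolated in $\calC(t_0)$. In Case B, $\gamma(I)\subset\Oomega\times\{t_0\}$ and $\calC(t_0)$ agrees, near $x_0$, with the image of the analytic regular curve $s\mapsto\sfz(s)+(1+s)n_0$, whose velocity at $s=0$ equals $n_0\neq 0$.

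Let $M\subset\calC(t_0)$ denote the set of non-isolated points. The discussion above identifies $M$ with the locus of Case-B singular points and shows that near each element of $M$ the set $\calC(t_0)$ is parameterized by an analytic regular arc; the uniqueness statement $\calC\cap U=\gamma(I)$ makes two such parameterizations compatible (up to reparameterization) on their overlap, so $M$ inherits the structure of a boundaryless analytic $1$-manifold. An isolated point of $\calC(t_0)$ is both open and closed in $\calC(t_0)$, so every connected component of $\calC(t_0)$ either reduces to such a point or is entirely contained in $M$ and thus coincides with a connected component of $M$; in the latter case the component is, by hypothesis, a compact, connected, boundaryless analytic $1$-manifold, hence a circle. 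This yields the asserted decomposition, with the discrete set equal to $\calC(t_0)\setminus M$.

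For the moreover part, fix a connected curve $C\subset\calC(t_0)$. For every $x\in C$ the Case-B local description provides a product neighborhood $U_x=V_x\times(t_0-\eps_x,t_0+\eps_x)$ of $(x,t_0)$ with $\calC\cap U_x\subset\Oomega\times\{t_0\}$, whence $\calC(s)\cap V_x=\emptyset$ for $0<|s-t_0|<\eps_x$. Compactness of $C$ allows us to extract a finite subcover $V_{x_1},\dots,V_{x_N}$; setting $V:=\bigcup_i V_{x_i}$ and $\eps:=\min_i\eps_{x_i}$ completes the proof. The principal obstacle is the passage from the local Case-A/Case-B dichotomy to a globally consistent $1$-manifold structure, which hinges on the analytic dichotomy for $\psi$ and on the local uniqueness embodied in $\calC\cap U=\gamma(I)$; the remaining steps mirror those of Theorem \ref{prop:isolated_infty}.
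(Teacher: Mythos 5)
Your argument is correct and follows essentially the same route as the paper: you reuse the analytic curve $\gamma$ produced in the proof of Theorem \ref{prop:isolated_infty} (via (T1)--(T2) and Lemma \ref{l:crucial-infinite-dim}), apply the identity-theorem dichotomy to the analytic function $s\mapsto\sft(s)-t_0$ to distinguish isolated points from points lying on regular analytic arcs, and obtain the final assertion by covering the compact component $C$ with the local product neighborhoods. The extra detail you supply (gluing the local parameterizations into a $1$-manifold structure and identifying compact components as circles) is a harmless elaboration of the paper's argument, not a different method.
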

\begin{proof}
  Let us keep the same notation of the proof of
  Theorem \ref{prop:isolated_infty} and let $x_0$ be a singular
  point of $\calC(t_0)$.
  Since $f$ is analytic, the curve
  $\gamma$ parameterizing the set $\calC$ in the neighborhood $U$ of
  the singular point $(x_0,t_0)$ is analytic.
  If $x_0$ is an accumulation point of $\calC(t_0)$ then
  the $\sft$-component of $\gamma$ takes the value $t_0$ in a set accumulating at
  $0$,
  so that $\sft$ has to be identically constant. It follows that
  $\gamma(I)$ is contained in $\calC(t_0)$ so that we conclude that
  the connected component $C$ of $\calC(t_0)$ containing $x_0$ is a (non-degenerate)
  compact analytic curve.
  The last assertion follows by the fact that
  every point of $C\subset \calC(t_0) $ has a neighborhood $U$ in $\Oomega\times
  (0,T)$ such that $\calC\cap U\subset C\times\{t_0\}$ and that $C$ is compact.
\end{proof}
\subsection{A characterization of the transversality conditions}
\label{ss:2.2}

\noindent
Lemma \ref{l:crucial-infinite-dim} sheds light onto the relation between
the  first two transversality conditions, and the surjectivity of the operator $\rmd \rmF (x_0,t_0)$.
In order to get a characterization of the \emph{full} set of  transversality conditions of
\berin Definition \erin \ref{ass:tran-infinity}, one has to
bring into play  the ``augmented'' operator
\begin{equation}
\label{G-operator}
\rmG: \Oomega \times (0,T) \times \X
\to \Y \times \Y \qquad \rmG(x,t,v) := (\rmF(x,t), \rmD_x \rmF(x,t)[v]).
\end{equation}
Let us first prove a preliminary result.

\begin{lemma}\label{rmk:G_indice_0}
If $\rmF: \Oomega \times (0,T) \to \Y$ satisfies Assumption \ref{F_ass1_infty},
then $\rmG(\cdot,t,\cdot) : \Oomega \times \X\to \Y \times \Y$
is a Fredholm map of index $0$ for every $t\in(0,T)$.
\end{lemma}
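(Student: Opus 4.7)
The plan is to show that for every fixed $t \in (0,T)$ and every point $(x_0, v_0) \in \Oomega \times \X$ the total differential of the map $(x,v) \mapsto \rmG(x,t,v) = (\rmF(x,t), \rmD_x \rmF(x,t)[v])$ at $(x_0,v_0)$ is a Fredholm operator of index $0$. Direct differentiation with respect to $(x,v)$ gives
\begin{equation*}
M(h,k) \,:=\, \rmD_{(x,v)}\rmG(x_0,t,v_0)[h,k] \,=\, \bigl(L h,\; B h + L k\bigr),
\end{equation*}
where $L := \rmD_x \rmF(x_0,t) \in \mathcal{L}(\X;\Y)$ is Fredholm of index $0$ by Assumption \ref{F_ass1_infty}, and $B := \rmD_x^2 \rmF(x_0,t)[\cdot, v_0] \in \mathcal{L}(\X;\Y)$ is bounded (since $\rmF \in \rmC^2$). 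The problem therefore reduces to showing that the block-triangular operator
\begin{equation*}
M \,=\, \begin{pmatrix} L & 0 \\ B & L \end{pmatrix} \colon \X \oplus \X \longrightarrow \Y \oplus \Y
\end{equation*}
is Fredholm of index $0$ whenever $L$ is.

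To prove this I would carry out a direct analysis of $\NN(M)$ and $\RR(M)$, exploiting the topological splittings $\X = \NN(L) \oplus \X_1$ and $\Y = \RR(L) \oplus \Y_0$ afforded by the Fredholm property of $L$; here $L|_{\X_1}\colon \X_1 \to \RR(L)$ is a Banach isomorphism and $n := \dim \NN(L) = \codim \RR(L) < \infty$. Introducing the auxiliary linear map $\beta : \NN(L) \to \Y / \RR(L)$, $\beta(h_0) := B h_0 + \RR(L)$, and setting $r := \dim \beta(\NN(L)) \le n$, one checks that $(h,k) \in \NN(M)$ if and only if $h \in \ker \beta$ and $k$ lies in the affine subspace $L^{-1}(-Bh) + \NN(L)$, giving $\dim \NN(M) = (n-r) + n = 2n - r$. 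For the range: $(y_1,y_2) \in \RR(M)$ if and only if $y_1 \in \RR(L)$ and, writing $h_1 \in \X_1$ for the unique solution of $L h_1 = y_1$, the class of $y_2 - B h_1$ in $\Y / \RR(L)$ belongs to $\beta(\NN(L))$. This exhibits $\RR(M)$ as a closed subspace of $\Y \oplus \Y$ of codimension $n + (n-r) = 2n - r$, so that $\ind(M) = (2n-r) - (2n-r) = 0$.

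The main technical obstacle is the range count: since $B$ need not be compact in the infinite-dimensional setting, one cannot regard $M$ as a compact perturbation of the diagonal operator $\mathrm{diag}(L,L)$ and simply invoke the stability of the Fredholm index. One must instead exploit the block-triangular structure of $M$ together with the finite-dimensional splittings $\NN(L), \Y_0$ explicitly, as above. A more conceptual alternative, which I would record as a remark, is that the first-factor/second-factor projections organize $M$ into a commutative diagram of short exact sequences with $L$ on both outer columns, whence the additivity of the Fredholm index along short exact sequences (the snake lemma) delivers $\ind(M) = 2\,\ind(L) = 0$ at once.
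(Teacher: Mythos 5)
Your proposal is correct, and it reaches the paper's conclusion by a genuinely different route after the common first step: like the paper, you differentiate and reduce to the block-triangular operator $(h,k)\mapsto (Lh,\,Bh+Lk)$ with $L=\rmD_x\rmF(x_0,t)$ Fredholm of index $0$ and $B=\rmD^2_x\rmF(x_0,t)[\cdot,v_0]$ bounded. From there the paper only establishes \emph{finiteness}: it bounds $\dim\NN(M)$ via the subspace $\{h\in\NN(L):Bh\in\RR(L)\}$ (your $\ker\beta$), checks $\codim\RR(M)<\infty$ by adding $\SS\times\SS$ with $\SS+\RR(L)=\Y$, and then computes the index by the homotopy $A_\vartheta=(L,\,L+\vartheta B)$, $\vartheta\in[0,1]$: each $A_\vartheta$ is Fredholm by the same bounds, so local constancy of the index reduces everything to the diagonal operator $(L,L)$, which has index $2\,\ind(L)=0$. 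You instead compute both quantities exactly, $\dim\NN(M)=\codim\RR(M)=2n-r$ with $n=\dim\NN(L)$ and $r=\dim\beta(\NN(L))$, using the splittings $\X=\NN(L)\oplus\X_1$, $\Y=\RR(L)\oplus\Y_0$; your counts are right (the paper's displayed formula $\dim\NN(A)=\dim\mathsf M\cdot\dim\NN(H)$ is evidently a misprint for a sum, and only finiteness is used there). Your approach is self-contained and quantitative, avoiding the stability theorem; the paper's is shorter because, once Fredholmness of every $A_\vartheta$ is observed, homotopy invariance of the index (already recalled in its preliminaries) finishes the argument -- note that this sidesteps your stated obstacle: one does not need $B$ compact, only that the whole path consists of Fredholm operators. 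Two small points you should make explicit if you write this up: the closedness of $\RR(M)$ (it follows from the standard fact, stated in the paper's preliminaries, that a finite-codimensional range of a bounded operator is closed, or from continuity of $(L|_{\X_1})^{-1}$ by the open mapping theorem), and the directness of your complement of $\RR(M)$, i.e.\ that the lift of a complement of $\beta(\NN(L))$ in $\Y/\RR(L)$ can be chosen to meet $\RR(L)$ only in $0$.
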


\begin{proof}
In order to simplify the notation, we omit
to indicate the explicit dependence
on the time variable.
For a fixed $(x,v)\in \Oomega\times\X$, we can write
\[
\rmd\rmG(x,v)[\tilde x,\tilde v]=(H[\tilde x],K[\tilde x]+H[\tilde y])
\quad\mbox{for every }(\tilde x,\tilde v)\in \X\times \X,
\]
where
\[
H:=\rmD_x\rmF(x),\qquad\qquad K:=\rmD_x^2\rmF(x)[v,\cdot].
\]
Thus the thesis follows if we show that for every
Fredholm operator $\berin H\in\erin\mathcal L(\X;\Y)$ of index $0$ and
for every $K\in \mathcal L(\X;\Y)$ the operator
\begin{displaymath}
 A:\mathcal L(\X\times \X;\Y\times \Y),\quad
 A[x_1,x_2]=(H[x_1],H[x_2]+K[x_1])
\end{displaymath}
is a Fredholm operator of index $0$.

We first observe that
\begin{displaymath}
  (x_1,x_2)\in \NN(A) \quad\Leftrightarrow\quad
  x_1\in \NN(H),\quad
  K[x_1]=-H[x_2],
\end{displaymath}
so that, by introducing the finite-dimensional space
\begin{displaymath}
  \mathsf M:=\Big\{x\in \NN(H):K[x]s\in \RR(H)
  =\big(\NN(H^*)\big)^\perp\Big\},
\end{displaymath}
it is easy to check that
\begin{equation}
  \label{eq:12}
  \dim\big(\NN(A)\big)=\dim \mathsf M\cdot\dim(\NN(H))
  <\infty.
\end{equation}
Let now $\SS$ be a finite-dimensional subspace of $\Y$
such that $\SS+\RR(H)=\Y$.
it is immediate to check that
$(\SS\times\SS)+\RR(A)=\Y\times \Y$ so
that $\codim(\RR(A))$ is finite and $A$
is a Fredholm operator.

We consider now the continuous perturbation
$[0,1]\ni\vartheta\mapsto A_\vartheta:=(H,H+\vartheta K)$:
every $A_\vartheta$ is a Fredholm operator so that
the index of $A=A_1$ coincides with the index of $A_0=
(H,H)$ since the index is a continuous function.
On the other hand it is \berin straightforward \erin to check that
$A_0$ has index $0$.
\end{proof}

Let us now consider the restriction
(still denoted by $\rmG$)
of $\rmG$ \berin from \eqref{G-operator} \erin to the open domain
\begin{equation}
  \label{eq:16}
  \Sigma:=\Oomega\times(0,T)\times\big(\X\setminus\{0\}\big)
\end{equation}
and notice that
\begin{equation}
  \label{eq:9}
  \rmG^{-1}(0,0)=\Big\{(x,t,v)\in
  \Sigma:  (x,t)\in \calS,\ v\in \NN\big(\rmD_x \rmF(x,t)\big)
  \Big\}.
\end{equation}
According to the common terminology, we say that
$(0,0)$ is a regular value of $\rmG$ if $\rmd \rmG$ is surjective
at each point of $\rmG^{-1}(0,0)$.

\begin{lemma}\label{l:G-(4)}
Let us suppose that $\rmF$ satisfies Assumption \ref{F_ass1_infty}
and let $\rmG:\Sigma\to \Y\times\Y$ be defined
as in \eqref{G-operator}, \eqref{eq:16}.
Then $\rmF$ satisfies the transversality conditions
if and only if $(0,0)$ is a regular value of $\rmG$.
\end{lemma}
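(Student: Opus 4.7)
The plan is to characterize surjectivity of $\rmd\rmG(x_0,t_0,v_0)$ at a point $(x_0,t_0,v_0)\in \rmG^{-1}(0,0)$ by testing its two components against $\ell^*\in\NN(\rmD_x\rmF(x_0,t_0)^*)$, and then to match the resulting conditions one-to-one with (T1)--(T3). First I would compute the differential explicitly: at $(x_0,t_0,v_0)$, writing $L:=\rmD_x\rmF(x_0,t_0)$,
\[
  \rmd\rmG(x_0,t_0,v_0)[\tilde x,\tilde t,\tilde v]=\Bigl(L[\tilde x]+\tilde t\,\partial_t\rmF(x_0,t_0),\ \rmD_x^2\rmF(x_0,t_0)[\tilde x,v_0]+\tilde t\,\partial_t\rmD_x\rmF(x_0,t_0)[v_0]+L[\tilde v]\Bigr).
\]
The first component is exactly $\rmd_{x,t}\rmF(x_0,t_0)[\tilde x,\tilde t]$, and the presence of the free variable $\tilde v$ in the second component (with $\tilde v$ entering only through $L$) means surjectivity of $\rmd\rmG$ onto $\Y\times\Y$ is controlled by the image of $L$ in the second slot, together with the ``mismatch'' produced by $\tilde x,\tilde t$.

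For the ``only if'' direction, I would argue as follows. Projecting $\rmd\rmG$ onto its first component already shows that $\rmd\rmF(x_0,t_0)$ is onto, whence Lemma \ref{l:crucial-infinite-dim} yields (T1) and (T2). For (T3), I would argue by contradiction: assuming (T1)--(T2) but $\pairing{\Y^*}{\Y}{\ell^*}{\rmD_x^2\rmF(x_0,t_0)[v_0,v_0]}=0$, I pair the second component of $\rmd\rmG$ with $\ell^*\in \NN(L^*)$. The $L[\tilde v]$ term dies, and the remaining expression depends on $\tilde x$ only modulo $\NN(L)=\mathrm{span}(v_0)$ precisely because of the vanishing just assumed. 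Since the first equation already fixes $\tilde t$ uniquely (by (T2)) and $\tilde x$ modulo $\mathrm{span}(v_0)$, the quantity $\pairing{\Y^*}{\Y}{\ell^*}{\xi_2}$ is then forced to equal a fixed scalar depending only on $\xi_1$. This rules out surjectivity of $\rmd\rmG$, contradicting the assumption.

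For the ``if'' direction I would assume (T1)--(T3) and construct a preimage of an arbitrary $(\xi_1,\xi_2)\in \Y\times\Y$ in two steps. Step one: use (T1)+(T2) and Lemma \ref{l:crucial-infinite-dim} to find $(\tilde x_0,\tilde t)$ solving $L[\tilde x_0]+\tilde t\,\partial_t\rmF(x_0,t_0)=\xi_1$; here $\tilde x_0$ is determined up to an additive term $s\,v_0$, $s\in\R$. Step two: substitute $\tilde x=\tilde x_0+s v_0$ in the second component and solve for $\tilde v$; this requires
\[
  \xi_2-\rmD_x^2\rmF(x_0,t_0)[\tilde x_0+s v_0,v_0]-\tilde t\,\partial_t\rmD_x\rmF(x_0,t_0)[v_0]\in\RR(L).
\]
Since $\RR(L)$ has codimension $1$ and, by (T3), $\rmD_x^2\rmF(x_0,t_0)[v_0,v_0]\notin\RR(L)$, the coefficient of $s$ spans the one-dimensional quotient $\Y/\RR(L)$, so that $s$ can be chosen to enforce membership and then $\tilde v$ can be found.

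The main obstacle in the whole argument is keeping track of the precise role played by each transversality condition at each step: (T1) supplies the one-dimensionality needed for the single scalar parameter $s$ to control the quotient; (T2) both makes the first equation solvable and determines $\tilde t$ uniquely; (T3) guarantees that the single degree of freedom in $\tilde x$ actually moves the second-component obstruction nontrivially. Once this bookkeeping is laid out, the rest reduces to bounded-linear algebra together with the Fredholm structure granted by Assumption \ref{F_ass1_infty} and Lemma \ref{rmk:G_indice_0}.
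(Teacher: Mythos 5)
Your proposal is correct, and its skeleton matches the paper's: you compute $\rmd\rmG(x_0,t_0,v_0)$ explicitly, extract (T1)--(T2) from the surjectivity of the first component via Lemma \ref{l:crucial-infinite-dim}, and isolate (T3) by pairing the second component with $0\neq\ell^*\in\NN(\rmD_x\rmF(x_0,t_0)^*)$, which kills the $\rmD_x\rmF(x_0,t_0)[\tilde v]$ term. The differences are in how the two implications are closed. For the necessity of (T3) you argue by contraposition (if the pairing $\pairing{\Y^*}{\Y}{\ell^*}{\rmD_x^2\rmF(x_0,t_0)[v_0,v_0]}$ vanished, then $\pairing{\Y^*}{\Y}{\ell^*}{\xi_2}$ would be a function of $\xi_1$ alone, contradicting surjectivity), whereas the paper derives (T3) directly by picking $y_1=0$ and $y_2$ with nonzero pairing against $\ell^*$ and inspecting a preimage; these are the same computation read in opposite directions. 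The more substantive divergence is in the sufficiency direction: you build a preimage of an arbitrary $(\xi_1,\xi_2)$ constructively, using (T2) to fix $\tilde t$ and solve the first equation up to the one-parameter freedom $s v_0$ from (T1), and then (T3) to show this single parameter sweeps the one-dimensional quotient $\Y/\RR(\rmD_x\rmF(x_0,t_0))$ so that the second equation becomes solvable in $\tilde v$; the paper instead runs a dual argument, showing that any $(\ell_1^*,\ell_2^*)$ annihilating $\RR(\rmd\rmG(x_0,t_0,v_0))$ must vanish. Your primal route makes the role of each condition (and of the codimension-one structure) very explicit and only needs the closedness of $\RR(\rmD_x\rmF(x_0,t_0))$, which is automatic from the Fredholm assumption; the paper's dual route is shorter but implicitly relies on the closedness of the range of $\rmd\rmG$ (again from the Fredholm structure) to pass from a trivial annihilator to surjectivity. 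Either way the bookkeeping you describe is exactly what is needed, so the argument is sound.
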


\begin{proof}
A direct calculation shows that
$\rmd \rmG (x_0,t_0,v_0)[\tilde x,\tilde t,\tilde v]=(y_1,y_2)$
if and only if
\begin{equation}
\label{onto}
\left\{
\begin{array}{ll}
y_1 = \rmD_x \rmF (x_0,t_0)[\tilde x ] + \tilde t \partial_t \rmF
(x_0,t_0)=\rmd f(x_0,t_0)[\tilde x,\tilde t]
\\
y_2 = \rmD_x^2 \rmF (x_0,t_0)[v_0,\tilde x] +\tilde t\,
\rmD_x  \partial_t\rmF (x_0,t_0) [v_0] +
\rmD_x \rmF (x_0,t_0)[\tilde v ] .
\end{array}
\right.
\end{equation}
Let us first suppose that $(0,0)$ is
a regular value of $\rmG$,
let $(x_0,t_0)\in \calS$ and $0\neq v_0\in \NN\big(\rmD_x\rmF(x_0,t_0)\big)$.
It follows that
$(x_0,t_0,v_0)\in \rmg^{-1}(0,0)$ so that
$\rmd g(x_0,t_0,v_0)$ is onto
and, in view
of Lemma \ref{l:crucial-infinite-dim},
conditions (T1-2) hold.

Let us now choose $0\neq
\ell^*\in \NN\big(\rmD_x \rmF(x_0,t_0)^*\big)$,
$y_2\in \Y$ \berin such that $\pairing{\Y^*}{\Y}{\ell^*}{y_2}\neq0$, \erin
$y_1=0$, and a solution $(\tilde x,\tilde t,\tilde v)$ of \eqref{onto}.
From the first line of \eqref{onto} we get
\[
0  = \berin\pairing{\Y^*}{\Y}{\ell^*}{ \rmD_x \rmF (x_0,t_0)[\tilde x ] }  +
\tilde t \,\pairing{\Y^*}{\Y}{\ell^*}{ \partial_t \rmF (x_0,t_0)}
  =  0+ \tilde t \pairing{\Y^*}{\Y}{\ell^*}{ \partial_t \rmF (x_0,t_0)}\erin
\]
so that $\tilde t =0$, since
$\berin\pairing{\Y^*}{\Y}{ \ell^*}{\partial_t \rmF (x_0,t_0)}\erin\neq 0$.
The same \berin relation \erin
then yields $\tilde x\in \NN\big(\rmD_x \rmF(x_0,t_0)\big)$,
whence $\tilde x = \lambda v_0$, for some $\lambda \in \R$.

From the second line of \eqref{onto}
we obtain (taking into account that $\tilde t=0$)
\[
\begin{aligned}
0\neq\berin\pairing{\Y^*}{\Y}{ \ell^*}{ y_2}\erin   & =
\lambda\,\berin\pairing{\Y^*}{\Y}{ \ell^*}{ \rmD_x^2\rmF (x_0,t_0)[v_0,v_0]}   +
\pairing{\Y^*}{\Y}{\ell^*}{ \rmD_x \rmF (x_0,t_0)[\tilde v ] }\erin
\\
&=\lambda\,\berin\pairing{\Y^*}{\Y}{\ell^*}
{ \rmD_x^2 \rmF (x_0,t_0)[v_0,v_0 ] }\erin  + 0.
\end{aligned}
\]
Therefore $\berin\pairing{\Y^*}{\Y}{ \ell^*}{\rmD_x^2 \rmF (x_0,t_0)[v_0,v_0 ]}\erin\neq 0$,
which shows the validity of condition (T3).

\noindent Let us now prove the converse implication. We assume the
validity of (T1-2-3), we choose a point $(x_0,t_0,v_0)\in
\rmG^{-1}(0,0)$ and we want to prove that $\rmd \rmG(x_0,t_0,v_0)$
is onto; equivalently, if $(\ell^*_1,\ell^*_2)\in
\big(\RR(\rmd\rmG(x_0,t_0,v_0))\big)^\perp$ then
$\ell_1^*=\ell_2^*=0$.

The fact that $\ell_1^*=0$ is an immediate consequence
of the surjectivity of $\rmd \rmF(x_0,t_0)$, which is
the first component of
$\rmd\rmG(x_0,t_0,v_0)$ as showed by \eqref{onto}.

Choosing $\tilde x=0, \, \tilde t=0$
and an arbitrary $\tilde v$  in the second component
of \eqref{onto} we see that
$\ell_2^*\in \big(\RR(\rmD_x \rmF(x_0,t_0))\big)^\perp=
\NN\big(\rmD_x\rmF(x_0,t_0)^*\big)$.
Choosing now $\tilde v=0$, $\tilde t=0$, and $\tilde x=v_0$ we get
$\berin\pairing{\Y^*}{\Y}{\ell_2^*}{\rmD_x^2\rmF(x_0,t_0)[v_0,v_0]}\erin=0.$
Since $0\neq v_0\in \NN\big(\rmD_x\rmF(x_0,t_0)\big)$, (T3) yields $\ell_2^*=0$.
\end{proof}

\noindent
Notice that for $(x_0,t_0)\in \mathcal S$
the operator $\rmd \rmG(x_0,t_0,0)$ cannot be onto, since the second component of
\eqref{onto} reduces to $\rmD_x\rmF(x_0,t_0)[\cdot]$
whose range has codimension $1$.

\section{On the genericity of the transversality conditions}\label{s:3}

\noindent In this section we discuss the genericity of the
transversality conditions of \berin Definition \erin
\ref{ass:tran-infinity}. In the following Section \ref{ss:4.1}, we
give our main result in this direction, Theorem
\ref{thm:genericity_infty}. It states that, up to a \emph{small} (in
the topological sense) set of operators within a certain class, it
is always possible to perturb a (suitably smooth) map $\rmF :  \rbe \Oomega \rend
\times (0,T) \to \Y$ with such operators, in such a way as to obtain
a map $\tilde \rmF$ which complies with \berin Definition \erin
\ref{ass:tran-infinity}. \berin The proof of Theorem
\ref{thm:genericity_infty} relies on the characterization of the
transversality conditions provided in Section \ref{ss:2.2}, and on a
well-known genericity result from the seminal paper
\cite{Saut-Temam79}. In Section \ref{ss:4.2} we revisit the
genericity result,  as well as Theorem \ref{prop:isolated_infty}, in
the case where $\rmF$ is the space differential of a time-dependent
energy functional.\erin

 Notice that the genericity of conditions (T1-T2) of Definition
\ref{ass:tran-infinity} with respect to a simpler class of
perturbations is a direct consequence of the results of
\cite{Saut-Temam79}.

\subsection{The genericity results}
\label{ss:4.1}

\newcommand{\JJ}{j}
In order to make precise in which sense we are going to prove that the transversality conditions hold
``generically'', let us recall that a set $N$ in a topological space $\mathcal T$
is said to be \emph{nowhere dense} if the interior of its closure is empty.
Equivalently, its complement $\mathcal T\setminus A$
has dense interior, i.e.~$A$ is contained in the complement
of an open and dense set.

A set is said to be \emph{meagre} if it is contained in a countable union of nowhere dense  sets.
Conversely, a set is \emph{residual} if it is the complement of a
meagre set, i.e.~it contains the intersection
of a countable collection of open and dense sets.

We will suppose that
\begin{equation}
\text{$\X,\Y,\Z$ are separable Banach spaces,}\quad  X\subset \X,\
0\in Z\subset \Z\quad\text{open and connected}, \label{eq:13}
\end{equation}
and we will deal with a sufficiently large class of  additive
perturbations of  \berin the map $f: \Oomega \times (0,T) \to \Y$ of
the type
\begin{equation}
  \label{eq:11}
  \tilde f(x,t):=f(x,t)+y+j(x,z),\quad
  y\in \Y,\ z\in Z,
\end{equation}
obtained by means of
$\JJ : \Oomega\times Z \to \Y$ of class $\rmC^2$
with $\JJ(x,0)=0$.
We will suppose that
for every $x\in \Oomega,\, t\in (0,T),\, z\in Z$ and $v\in \X\setminus\{0\}$
the following admissibility conditions are satisfied by $\JJ$:
\begin{gather}
  \label{eq:15-bis}
    \tag{J1}
    \begin{gathered}
      \text{$\rmD_x \tilde f(x,t)=
        \rmD_x f(x,t)+\rmD_x \JJ(x,z)$ is a Fredholm operator in $\calL(\X;\Y)$,}
     \end{gathered}
    \\
    \label{eq:15-ter}
    \tag{J2}
    (\tilde v,\tilde z)\mapsto \rmD_x \tilde f(x,t)[\tilde v]+\rmD^2_{xz}
    \JJ(x,z)[v,\tilde z]\text{ is surjective,}
\intertext{i.e.~ for every $w\in \Y$ there exist $\tilde v,\tilde z\in
\X\times \Z$ such that }
\label{eq:24}
\rmD_x f(t,x)[\tilde v]+\rmD_x \JJ(x,z)[\tilde v]+
  \rmD^2_{xz}\JJ(x,z)[v,\tilde z]=w.
\tag{J2'}
\end{gather}
Notice that when $\JJ$ is a bilinear map, \eqref{eq:24}
takes the simpler form
\begin{equation}
  \label{eq:25}
  \rmD_x f(t,x)[\tilde v]+ \JJ(\tilde v,z)+
  \JJ(v,\tilde z)=w.
\end{equation}
In Example \ref{ex:admissible-J} below, we exhibit a particular case
of admissible mapping \rbe $j$, \rend with values in a suitable
space of compact operators.
\begin{example}
\label{ex:admissible-J}
\upshape
Let us consider a
separable
    Banach space $\calK$ continuously included in $\calL(\X;\Y)$
 such that
\begin{gather}
  \label{eq-15K2}
    \tag{K1}
    \text{every $K\in \calK$ is a compact operator,}
    \\
    \label{eq-15K3}
    \tag{K2}
    \text{for every $v\in\berin\X\setminus\{0\}\erin,\,w\in \Y$
      there exists $K\in \calK:\quad K[v]=w$.}
\end{gather}
As a typical example, we can choose $\mathcal{K}$  to be the closure in
$\calL(\X,\Y)$ of the
nuclear operators of the form
\begin{displaymath}
  K[x]=\sum_{n=1}^N\pairing{\X^*}\X{\ell_n}x\,y_n,\quad
  \ell_n\in \X_0^*,\ y_n\in \Y.
\end{displaymath}
where $\X_0^*$ is a separable subspace of $\X^*$ that separates the
points of $\X$.
Observe that condition \eqref{eq-15K3}  holds. Indeed, whenever $v\in \X \rbe \setminus \{0\} \rend,\ w\in \Y$ are given,
by choosing $\ell\in \X_0^*$ so that
$\pairing{\X^*}\X\ell {v}=1$ we can simply set
\begin{equation}
  \label{eq:18}
  K[x]:=\pairing{\X^*}\X\ell x\,w.
\end{equation}

Hence, we let $\Z:= \mathcal{K}$
and take as $\JJ$ the bilinear map
\begin{equation}
\label{citato-dopo}
\JJ:  \X\times \mathcal{K} \to  \Y,\quad
\JJ(x,K):=K[x]
\end{equation}
Then, condition \eqref{eq:15-bis} is satisfied,  since for every
$K\in \calK$ the differential $\rmD_x\JJ(x,K)=K$ is a compact
operator  and  thus, when added to a Fredholm operator, gives rise
to  a Fredholm operator with the same index. It is immediate to
check that \eqref{eq-15K3} guarantees the validity of
\eqref{eq:15-ter},  since $\rmD^2_{xz}\JJ(x,K)[\tilde v,\tilde
K]=\tilde K[\tilde v]$.
\end{example}
\erin

We then have the following theorem.
\begin{theorem}\label{thm:genericity_infty}
Let  us assume that \eqref{eq:13} and the admissibility conditions
{\em (J1-2)} hold, for a map $\rmF \in \rmC^3(\Oomega \times (0,T);
\Y)$ complying with Assumption \ref{F_ass1_infty} \rbe and for $\JJ
\in \rmC^2(\Oomega\times Z ; \Y)$. \rend

Every open neighborhood $U$ of the origin in
$\Y\times \Z$ contains a residual subset $U_r$
such that for every $(y,z)\in U_r$ the map
\begin{equation}\label{eq:perturbation_F}
\tilde\rmF:\X\times(0,T)\longrightarrow \Y \qquad\tilde\rmF(x,t):=
\rmF(x,t)+y+ \JJ(x,z).
\end{equation}
satisfies the transversality conditions \berin of Definition \erin \ref{ass:tran-infinity}.
\end{theorem}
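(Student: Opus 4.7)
\medskip
\noindent\textbf{Proof plan.}
The strategy is to reduce the theorem to an abstract transversality/genericity statement, via the characterization of the transversality conditions established in Lemma~\ref{l:G-(4)}. Namely, to the perturbed map $\tilde\rmF(x,t)=\rmF(x,t)+y+\JJ(x,z)$ we associate its augmented operator $\tilde\rmG_{y,z}(x,t,v):=(\tilde\rmF(x,t),\rmD_x\tilde\rmF(x,t)[v])$ on $\Sigma=\Oomega\times(0,T)\times(\X\setminus\{0\})$, so that by Lemma~\ref{l:G-(4)}, $\tilde\rmF$ satisfies the transversality conditions if and only if $(0,0)$ is a regular value of $\tilde\rmG_{y,z}$. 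I then view the two-parameter family as a single map
\[
\mathcal G:\Sigma\times \Y\times Z\to \Y\times \Y,\qquad
\mathcal G(x,t,v,y,z):=\big(\rmF(x,t)+y+\JJ(x,z),\,\rmD_x\rmF(x,t)[v]+\rmD_x\JJ(x,z)[v]\big),
\]
and invoke the Saut--Temam genericity theorem from \cite{Saut-Temam79} to conclude that the set of $(y,z)\in\Y\times Z$ for which the section $\tilde\rmG_{y,z}$ has $(0,0)$ as a regular value is residual. Intersecting this residual set with the given neighborhood $U$ yields the desired $U_r$.

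The core technical step is to check that $(0,0)$ is a regular value of the global map $\mathcal G$. Fix a point $(x_0,t_0,v_0,y_0,z_0)\in \mathcal G^{-1}(0,0)$, which by definition corresponds to a singular zero $(x_0,t_0)\in\calS$ of $\tilde\rmF$ with $0\neq v_0\in\NN(\rmD_x\tilde\rmF(x_0,t_0))$. A direct computation shows that the total differential $\rmd\mathcal G$ sends $(\tilde x,\tilde t,\tilde v,\tilde y,\tilde z)$ to
\[
\Big(\rmD_x\tilde\rmF[\tilde x]+\tilde t\,\partial_t\tilde\rmF+\tilde y+\rmD_z\JJ[\tilde z],\;
\rmD^2_x\tilde\rmF[v_0,\tilde x]+\tilde t\,\rmD_x\partial_t\rmF[v_0]+\rmD_x\tilde\rmF[\tilde v]+\rmD^2_{xz}\JJ[v_0,\tilde z]\Big),
\]
evaluated at $(x_0,t_0,z_0)$. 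To hit a target $(w_1,w_2)\in\Y\times\Y$, I first choose $\tilde x=\tilde t=0$ and exploit the admissibility condition (J2): it furnishes $(\tilde v,\tilde z)$ satisfying $\rmD_x\tilde\rmF(x_0,t_0)[\tilde v]+\rmD^2_{xz}\JJ(x_0,z_0)[v_0,\tilde z]=w_2$, which handles the second component. The free parameter $\tilde y\in\Y$ in the first component can then be chosen to absorb any remaining contribution, i.e.\ $\tilde y:=w_1-\rmD_z\JJ(x_0,z_0)[\tilde z]$. This proves $\rmd\mathcal G$ is surjective on $\mathcal G^{-1}(0,0)$.

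The remaining ingredients are standard verifications needed to apply the Saut--Temam genericity theorem. I would check that $\mathcal G$ is of class $\rmC^2$ on $\Sigma\times\Y\times Z$ (this is where the upgrade to $\rmF\in\rmC^3$ is used, since $\mathcal G$ involves $\rmD_x\rmF$), that for every fixed $(y,z)$ the section $\tilde\rmG_{y,z}(\cdot,t,\cdot)$ is a Fredholm map, with Fredholm index equal to $1$ when $t$ is also varied — this follows from Lemma~\ref{rmk:G_indice_0} combined with (J1), which ensures that the perturbation $\rmD_x\JJ(x,z)$ does not spoil the Fredholm property of $\rmD_x\rmF(x,t)$. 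Separability of $\Y\times\Z$ (assumed in \eqref{eq:13}) provides the parameter space requirement of \cite{Saut-Temam79}. The Saut--Temam theorem then yields a residual set $R\subset\Y\times Z$ such that $(0,0)$ is a regular value of $\tilde\rmG_{y,z}$ for every $(y,z)\in R$; by Lemma~\ref{l:G-(4)}, each such $\tilde\rmF$ satisfies the transversality conditions, and $U_r:=U\cap R$ is the claimed residual subset of $U$.

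The delicate point I expect to be the main obstacle is the compatibility of the admissibility conditions (J1)--(J2) with the surjectivity argument above: condition (J2) is tailored precisely so that the combined action of perturbing in $z$ and moving in $\X$ covers the full target $\Y$ even at singular points where $\rmD_x\tilde\rmF$ alone fails to be surjective. A subsidiary technical worry is matching the regularity and Fredholm-index hypotheses of \cite{Saut-Temam79} to our setting; this is where the standing assumption $\rmF\in\rmC^3$ rather than merely $\rmC^2$ (cf.\ Assumption~\ref{F_ass1_infty}) becomes essential, and where the role of (J1) is to preserve the Fredholm nature of the linearization throughout the perturbation family.
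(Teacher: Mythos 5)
Your proposal is correct and follows essentially the same route as the paper: the reduction via Lemma~\ref{l:G-(4)} to the augmented family $\mathcal G$ on $\Sigma\times\Y\times Z$, the application of the Saut--Temam genericity theorem after checking separability, $\rmC^2$-regularity and the Fredholm index~$1$ property (via (J1), Lemma~\ref{rmk:G_indice_0} and the argument of Remark~\ref{rmk:index_0_index_1}), and the surjectivity check with $\tilde x=\tilde t=0$, (J2) for the second component and $\tilde y$ absorbing the first. This matches the paper's proof of Theorem~\ref{thm:genericity_infty} together with Proposition~\ref{analogue-1-true}.
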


\begin{proof}
First of all, it is useful to pass from $\rmF$ to the functional \berin
\[
\mathcal{F}: \Oomega \times (0,T)\times \Y\times
\Z
 \longrightarrow \Y\qquad
\mathcal{F}(x,t,y,z):=\rmF(x,t) + y +
\JJ(x,z),
\]
which incorporates the perturbation terms, so that
the map $\tilde\rmF$ of \eqref{eq:perturbation_F}
coincides with
$\calF(\cdot,\cdot,y,z)$.

\berin In accord with  \eqref{G-operator}, \erin 
we consider the \berin \emph{augmented} \erin functional
$\mathcal{G}: \Sigma\times \Y \times Z \to \Y \times \Y$
\berin (recall that  $\Sigma:=\Oomega\times(0,T)\times\big(\X\setminus\{0\}\big)$) \erin
\begin{equation}
\label{augmen-G}
\begin{gathered}
\mathcal{G}(x,t,v,y,z):= \left(
\begin{array}{cc}
\mathcal{F}(x,t,y,z)\\
\rmD_x \mathcal{F}(x,t,y,z)[v]
\end{array}
\right)
=
\left( \begin{array}{cc}
\rmF(x,t) + y + \JJ(x,z)\\
\rmD_x \rmF (x,t) [v] + \rmD_x \JJ(x,z)[v]
\end{array}
\right),
\end{gathered}
\end{equation}
which for every $(y,z)\in \Y\times Z$ gives raise to
the perturbed functionals
\begin{equation}
  \label{eq:17}
  \tilde g(x,t,v)=\calG(x,t,v,y,z)=
  \Big(f(x,t)+y+\JJ(x,z),\big(\rmD_x f(x,t)+\rmD_x \JJ(x,z)\big)[v]\Big).
\end{equation}
\erin
By Lemma \ref{l:G-(4)}, $\tilde f$ satisfies the transversality
conditions if and only if $(0,0)$ is a regular value for
$\tilde g$. We conclude by applying the next result.
\end{proof}

\begin{proposition}
\label{analogue-1-true}
Under the same assumptions of \berin Theorem \erin \ref{thm:genericity_infty}, the set
\begin{equation*}
\berin \mathfrak{R}:= \{ (y,z) \in  \Y \times \Z \, :
 \ (0,0) \text{ is a regular value of  the map }
 (x,t,v)\mapsto \mathcal{G}(x,t,v,y,z)\} \erin
\end{equation*}
is \emph{residual} in  \berin $\Y\times \Z$. \erin
\end{proposition}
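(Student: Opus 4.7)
The plan is to reduce the claim to the abstract transversality density theorem of Saut and Temam \cite[Theorem~1.1]{Saut-Temam79}, applied to the augmented operator $\mathcal{G}$ of \eqref{augmen-G} viewed as a family of maps on $\Sigma$ parameterized by $(y,z) \in \Y \times Z$. That theorem requires two checks: first, that for every fixed parameter $(y,z)$ the ``vertical slice'' $\tilde \rmG := \mathcal{G}(\cdot,\cdot,\cdot,y,z)$ is Fredholm on $\Sigma$ of a definite index; second, that $\mathcal{G}$ itself admits $(0,0)$ as a regular value on the full product domain $\Sigma \times \Y \times Z$. Once both are in place, the cited result gives exactly that the set $\mathfrak{R}$ of good parameters is residual.

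The first check reduces to Lemma \ref{rmk:G_indice_0} applied to the perturbed map $\tilde \rmF(x,t) := \rmF(x,t) + y + \JJ(x,z)$: indeed $\tilde \rmG$ is precisely the augmented operator associated with $\tilde \rmF$ via the prescription \eqref{G-operator}, and hypothesis (J1) guarantees that $\rmD_x \tilde \rmF(x,t)$ is a Fredholm operator in $\mathcal{L}(\X;\Y)$. The lemma then yields the Fredholm property of $(x,v) \mapsto \tilde \rmG(x,t,v)$ at every fixed $t$, and incorporating the one-dimensional dependence on $t$ only raises the index by one.

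The second check is the crux of the argument. I fix $(x_0,t_0,v_0,y_0,z_0) \in \mathcal{G}^{-1}(0,0)$ and note that $v_0 \neq 0$ by definition of $\Sigma$. A direct calculation shows that, when restricted to increments $(\tilde x, \tilde t, \tilde v, \tilde y, \tilde z)$ with $\tilde x = \tilde t = 0$, the second component of $\rmd \mathcal{G}$ evaluates to
\begin{equation*}
\rmD_x \tilde \rmF(x_0,t_0)[\tilde v] + \rmD_{xz}^2 \JJ(x_0,z_0)[v_0, \tilde z].
\end{equation*}
Given an arbitrary target $(w_1,w_2) \in \Y \times \Y$, I first invoke admissibility condition (J2) at $(x_0,t_0,z_0)$ with the distinguished vector $v_0 \neq 0$ to obtain $(\tilde v, \tilde z)$ solving the displayed equation with right-hand side $w_2$. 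The translation parameter $\tilde y \in \Y$, which enters only the first component of $\rmd\mathcal{G}$ additively and is absent from the second, is then chosen so that the first component equals $w_1$. Hence $\rmd \mathcal{G}$ is surjective at every preimage of $(0,0)$, which is the regular-value property.

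The separability assumption \eqref{eq:13} and the $C^2$-regularity of $\mathcal{G}$ inherited from $\rmF \in C^3$ and $\JJ \in C^2$ meet the standing hypotheses of \cite[Theorem~1.1]{Saut-Temam79}, the regularity threshold being adequate since $\tilde \rmG$ has index $1$. The main technical delicacy I expect concerns the restriction of the domain to $\Sigma = \Oomega \times (0,T) \times (\X \setminus \{0\})$ rather than the full space: this restriction is in fact essential, as (J2) is postulated only for $v \neq 0$ and the regular-value condition would otherwise fail at $v = 0$ (cf.\ the remark following Lemma \ref{l:G-(4)}).
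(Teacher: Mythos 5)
Your proposal follows essentially the same route as the paper: both reduce the statement to \cite[Theorem 1.1]{Saut-Temam79} by verifying separability, the $\rmC^2$-regularity of $\mathcal{G}$ together with the Fredholm index-$1$ property of each slice $\tilde\rmG$ (via Lemma \ref{rmk:G_indice_0}, condition (J1) and the index argument of Remark \ref{rmk:index_0_index_1}), and the regular-value property of $(0,0)$ for $\mathcal{G}$ by taking $\tilde x=\tilde t=0$, solving the second component with (J2) applied to $v_0\neq 0$, and then adjusting $\tilde y$ to hit $w_1$ in the first component. This matches the paper's proof step for step, including the explicit choice of $\tilde y$, so the argument is correct.
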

\begin{proof}
We are going to apply \cite[Theorem 1.1, Remark A.1]{Saut-Temam79} and
thus check that the corresponding assumptions hold, namely \berin
\begin{enumerate}[(a)]
\item the space   $\Y \times \Z $ is separable;
\item $\mathcal{G}\in\rmC^2(\Sigma \times \Y \times Z;\Y\times \Y)$ and
  for every $(y,z) \in  \Y \times Z$ the map $(x,t,v) \mapsto \mathcal{G}(x,t,v,y,z)$
is Fredholm with index $1$;
\item $(0,0)$ is a regular value for $\mathcal{G}$.
\end{enumerate}
Condition (a) clearly holds since we \berin have \erin assumed
$\Y$ and $\Z$ separable.  \erin

As for (b), clearly $\mathcal{G}$ is of class $\rmC^2$ thanks to the
enhanced $\rmC^3$-regularity required of $\rmF$ \berin and to the
$\rmC^2$-regularity of $\JJ$.  \erin Moreover, \rbe condition (J1),
\rend Lemma \ref{rmk:G_indice_0} and the same argument of Remark
\ref{rmk:index_0_index_1} yield that every perturbed functional
$\tilde\rmG$ as in \eqref{eq:17} is Fredholm of index $1$.

Let us now focus on the last property (c),  and let us set
$h(t,x,z):=f(t,x)+\JJ(z,x)$.  We have to check that, \berin if
$\mathcal{G}(x_0,t_0,v_0,y_0,z_0) =(0,0)$, then
$\rmd \mathcal{G}(x_0,t_0,v_0,y_0,z_0)$ is onto, 
namely that for every
$(w_1, w_2) \in \Y \times \Y$
there exists $(\tilde
x, \tilde t, \tilde v, \tilde{y}, \tilde z) \in \X \times \R
\times \X \times \Y \times \Z$
such that
\begin{subequations}
\label{surj}
\begin{align}
 &
 \label{w-1} w_1= \rmD_x h(x_0,t_0,z_0) [\tilde x] +
 \tilde t\,
\partial_t \rmF(x_0,t_0) +\tilde y +
\rmD_z \JJ(x_0,z_0)[\tilde z],
\\
& \label{w-2}  w_2 = \rmD_x^2 h(x_0,t_0,z_0) [v_0,\tilde x]
  +
\tilde t\,
\partial_t \rmD_x f(x_0,t_0) [v_0]+
\rmD_x h(x_0,t_0,v_0) [\tilde v]+
\mathrm{D}^2_{xz}\JJ(x_0,z_0)[v_0,\tilde z].
\end{align}
\end{subequations}
For this, we choose $\tilde x =0$, $\tilde t =0 $ and we use
condition \rbe (J2) \rend to find $\tilde v\in \X$ and $\tilde z \in
\Z$ such that \eqref{w-2} is satisfied.
In order to fulfill   \eqref{w-1},
 we take
$\tilde y:=w_1- \mathrm{D}\JJ(z_0)(\tilde z)[x_0]$. With this, we conclude that (c) holds. \erin
\end{proof}

\begin{remark}[The finite-dimensional case]
  \label{rem:fdimproof}
  \upshape
  \berin Our genericity result in the finite-dimensional setting of
  Sec.\ \ref{s:1},  Theorem
  \ref{thm:genericity_finite},  derives from Theorem
  \ref{thm:genericity_infty}.

   Indeed,   
the perturbed map $\tilde\rmF$ in
  \eqref{eq:perturbation_F-finite}  is of the form
  \eqref{eq:perturbation_F}, where we have taken as admissible perturbation $\JJ$
  the mapping \eqref{citato-dopo} from Example \ref{ex:admissible-J}.  \erin
  We accordingly introduce the
  finite-dimensional analogues of the functionals $\mathcal{F}$ and
  $\mathcal{G}$, to which Lemma \ref{l:G-(4)} clearly applies.  In
  this case, the set $\Sigma$ defined in \eqref{eq:16}
  reduces to
  $\Oomega\times(0,T)\times(\R^n\setminus\{0\})$
  and $\calK $ is $\mathbb{M}^{n \times n}$.
  Also, note that since
  $\rmF \in\rmC^3 (\Oomega \times (0,T); \R^n)$,
  then $\mathcal G$ is
  of class $\rmC^2$.  Proceeding as in the proof of
  \cite[Theorem 1.1]{Saut-Temam79} it is possible to show that proving
  that the set
  \begin{equation}\label{eq:conclu_finite}
    \begin{gathered}
      \mathfrak{R}:= \{ (y,K ) \in \R^n\times \mathbb M^{n\times n}
      \, : (0,0) \text{ is a regular
        value of the map } \mathcal{G}(\cdot,\cdot,\cdot,y,K) \}
    \end{gathered}
  \end{equation}
  has full Lebesgue measure,
  is equivalent to proving that the set
  $\mathfrak V$ of the regular
  values of the function $\pi:\mathcal G^{-1}(0,0)\to
  \R^n\times \mathbb M^{n\times n}$ has
  full Lebesgue measure in $\R^n\times \mathbb M^{n\times n}$,
  where $\pi$ is the projection onto the last two components
  in $\Sigma\times \R^n\times \mathbb M^{n\times n}$.
  This property follows from the regularity
  of $\mathcal G$,
  the fact that $(0,0)$ is a regular value of
  $\calG$ so that $\pi$ is a $\rmC^2$ map, and the
  classical Sard's Theorem, since
  $2>{\rm dim}(\mathcal
  G^{-1}(0,0))-{\rm dim}(\R^n\times \mathbb M^{n\times n})
  =n^2+n+1-(n+n^2)=1$.  Once
  (\ref{eq:conclu_finite}) is established, we can conclude as done in
  the proof of Theorem \ref{thm:genericity_infty}.
\end{remark}


\subsection{Critical points of an energy functional}
\label{ss:4.2}


In this last section we consider the particular
case when
\[
\text{$\X\subset \H,\  \Y\subset \H^*$
\quad with
 continuous and dense inclusions,}
\]
$\H$ is a separable Hilbert space and
$\rmF$ is the space differential of a time-dependent
functional $\mathscr E:\Oomega\times (0,T)\to\R$, i.e.
\begin{equation}
  \label{eq:19}
  \rmF=\rmD_x \mathscr E,\qquad
  \mathscr E\in \rmC^3(\Oomega\times(0,T)),\qquad
  \Oomega\subset \X\quad\berin\text{connected and open.}\erin
\end{equation}
 We are thus assuming that the differential of the energy takes
values (and is regular) in a possibly smaller Banach space $\Y$
contained in $\X^*$. On the other hand (see \cite[Remark
1.1]{Saut-Temam79}) for every $(x,t)\in \Oomega\times (0,T)$ we will
suppose that the linear operator $L$ associated with the second
order differential $\rmD^2_x\mathscr E$ admits a unique continuous
extension $\tilde L\in \calL(\H,\H^*)$ satisfying
\begin{equation}\label{eq:26}
  D(\tilde L;\Y)=\big\{h\in \H:\tilde L h\in \Y\big\}\subset \X.
\end{equation}
Notice that for every $v,w\in X$
\begin{displaymath}
  \pairing{\H^*}\H {Lv}w=
  \rmD^2_x \mathscr E(x,t)[v,w]=
  \rmD^2_x \mathscr E(x,t)[w,v]=
 \rbe  \pairing{\H^*}\H {Lw}v, \rend
\end{displaymath}
so that $\tilde L$ is selfadjoint.

\rbe In this setting, \rend $\calC$ is the set of critical points of
$\mathscr E$ and $\calS$ is the corresponding singular subset
\begin{equation}
  \label{eq:1}
  \calC:=\Big\{(x,t)\in \Oomega\times (0,T):
  \rmD_x \mathscr E(x,t)=0\Big\},
  \quad
  \calS:=\Big\{(x,t)\in \calC:\rmD_x^2\mathscr E(x,t)\
  \text{is not invertible.}\Big\}.
\end{equation}
We will assume that $\rmD_x \mathscr E$ is a Fredholm map  of index
$0$. In particular we can identify the kernel
$N=\NN(\rmD^2_x\mathscr E(x,t))$ in $\X$ with
$N^*=\NN(\rmD^2_x\mathscr E(x,t)^*)$ in $\Y^*$: it is in fact easy
to check that the canonical inclusion $\X\subset \Y^*$ induced by
the scalar product of $\H$ yields $N\subset N^*$, since \rbe (still
using the notation $L$ for the second order differential
$\rmD^2_x\mathscr E$), \rend  $Lv=0$ and $v\in \X$ yield
\begin{displaymath}
 \rbe  \pairing{\X^*}\X {L^* v}w=\pairing\H{\H^*}v{Lw}
  =  \pairing{\H^*}{\H}{Lv}w=0\quad\text{if }v\in \NN(L). \rend
\end{displaymath}
\rbe This implies that  $\NN(L)$ and  $\NN(L^*)$ coincide, \rend
since they have the same dimension by the index property.

The transversality conditions read 
\begin{definition}[Transversality conditions for
  a time-dependent functional]
  \label{ass:tran-E}
  We say that \berin $\scrE$ satisfies at a point $(x_0,t_0) \in \calS$
 the transversality conditions if \erin
\begin{enumerate}[\rm (E1)]
\item
  $\mathrm{dim}(\NN (\rmD^2_x \scrE (x_0,t_0)))=1$;
\item
  If $0\neq v\in \NN(\rmD^2_x \scrE(x_0,t_0))$
  then $\pairing{\X^*}{\X}{\partial_t
    \rmD_x \scrE (x_0,t_0)} {v}
  \neq 0$;
\item
  If $0\neq v\in \NN(\rmD^2_x \scrE(x_0,t_0))$ then
  $
  \rmD_x^3\scrE (x_0,t_0)[v,v,v]
    \neq 0$.
\end{enumerate}
The functional $\scrE$ satisfies the transversality conditions
if {\em (E1-2-3)} hold for every $(x_0,t_0)\in \calS$.
\end{definition}

Theorem \ref{prop:isolated_infty} immediately yields:

\begin{corollary}
  If $\scrE\in \rmC^3(\Oomega\times(0,T))$
  is a time-dependent functional
  with Fredholm differentials $\rmD\scrE_x(\cdot,t)$
  which satisfies the transversality conditions
  {\em (E1-2-3)}, then
  the sets $\calC(t):=\big\{x\in \Oomega:\rmD_x\scrE(x,t)=0\big\}$ are discrete
  for every $t\in (0,T)$.
\end{corollary}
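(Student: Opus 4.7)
The plan is to deduce this corollary from Theorem \ref{prop:isolated_infty} applied to $\rmF:=\rmD_x\scrE:\Oomega\times(0,T)\to\Y$. To this end I need to verify (i) that $\rmF$ satisfies Assumption \ref{F_ass1_infty}, and (ii) that the transversality conditions (E1-2-3) of Definition \ref{ass:tran-E} for $\scrE$ imply the conditions (T1-2-3) of Definition \ref{ass:tran-infinity} for $\rmF$. With these at hand, the sets $\calC$ and $\calS$ defined in \eqref{eq:1} coincide with those of \eqref{eq:8}, and the conclusion follows at once from Theorem \ref{prop:isolated_infty}.

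Point (i) is immediate: since $\scrE\in\rmC^3(\Oomega\times(0,T))$ we have $\rmF\in\rmC^2(\Oomega\times(0,T);\Y)$, and the Fredholm/index-$0$ property of $\rmF(\cdot,t)=\rmD_x\scrE(\cdot,t)$ is assumed. The main (modest) obstacle lies in point (ii), because (T2-3) are phrased in terms of a functional $\ell^*\in\NN(\rmD_x\rmF(x_0,t_0)^*)\subset\Y^*$, whereas (E2-3) are phrased in terms of a vector $v\in\NN(\rmD_x^2\scrE(x_0,t_0))\subset\X$. The algebraic device bridging this gap is the kernel identification discussed just before Definition \ref{ass:tran-E}: setting $L:=\rmD_x^2\scrE(x_0,t_0)$, the self-adjointness of the Hilbert extension $\tilde L$ implies that the canonical embedding $\X\hookrightarrow\H\cong\H^*\hookleftarrow\Y$ (which by duality provides $\X\hookrightarrow\Y^*$) sends $\NN(L)$ into $\NN(L^*)$, and these two kernels share the same finite dimension thanks to the index-$0$ property and \eqref{eq:7}. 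Hence the embedding restricts to an isomorphism $\NN(L)\simeq\NN(L^*)$.

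Granted this identification, I would then carry out the translation (E$i$)$\Rightarrow$(T$i$) for $i=1,2,3$. Condition (E1) is literally (T1). Given $0\neq\ell^*\in\NN(L^*)$, pick the corresponding $v\in\NN(L)\setminus\{0\}$; unwinding the Gelfand-triple dualities $\Y\subset\H^*\subset\X^*$ and $\X\subset\H\subset\Y^*$ yields
\[
\pairing{\Y^*}{\Y}{\ell^*}{\partial_t\rmF(x_0,t_0)}=\pairing{\X^*}{\X}{\partial_t\rmD_x\scrE(x_0,t_0)}{v},
\]
which turns (E2) into (T2). Since $\rmD_x^2\rmF=\rmD_x^3\scrE\in\calL^2(\X;\Y)$ is a symmetric trilinear form (Schwarz's theorem applied to $\scrE\in\rmC^3$), the same identification yields
\[
\pairing{\Y^*}{\Y}{\ell^*}{\rmD_x^2\rmF(x_0,t_0)[v,v]}=\rmD_x^3\scrE(x_0,t_0)[v,v,v],
\]
so that (E3) becomes (T3). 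The residual work is the bookkeeping of the dualities along the Gelfand triple, essentially a rewriting of the computation displayed right before Definition \ref{ass:tran-E}; after that, an application of Theorem \ref{prop:isolated_infty} closes the argument.
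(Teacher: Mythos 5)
Your proof is correct and takes essentially the same route as the paper: the corollary is there presented as an immediate consequence of Theorem \ref{prop:isolated_infty}, with the translation of (E1-2-3) into (T1-2-3) resting precisely on the kernel identification $\NN(L)\simeq\NN(L^*)$ (via the $\H$-induced inclusion $\X\subset\Y^*$) established in the text preceding Definition \ref{ass:tran-E}. Your duality bookkeeping simply makes explicit what the paper leaves implicit.
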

\berin We now address the
 genericity of
the transversality conditions from Definition \ref{ass:tran-E}. \erin
\berin In Corollary  \ref{cor-energy} below we  rephrase, in terms
of the functional $\scrE$, the statement of   Theorem
\ref{thm:genericity_infty},  considering here only the simple case
of \ Example \ref{ex:admissible-J}.  Accordingly, \erin we consider
the set $\calN_{sym}$ obtained by taking the closure in
$\calL^2(\X;\R)$ of all the symmetric bilinear forms of the type
\begin{equation}
  \label{eq:20}
  \mathscr K(x,y)=\sum_{j=1}^n
  \paired {\ell_j} x\,\paired {\ell_j} y.
\end{equation}
\begin{corollary}
\label{cor-energy}
  Let $\scrE\in \rmC^4(\Oomega\times(0,T))$ be
  a time-dependent functional with
  Fredholm differentials.
  Every open neighborhood $U$ of the origin in
  $\X^*\times \calN_{sym}$ contains a residual
  subset $U_r$ such that
  for every $(\ell,\mathscr K)\in U_r$ the functionals
  \begin{equation}
    \label{eq:21}
    \tilde \scrE(x,t)=\scrE(x,t)+\paired \ell x+
    \frac 12\mathscr K(x,x)
  \end{equation}
  satisfy the transversality conditions
  {\em (E1-2-3)}.
\end{corollary}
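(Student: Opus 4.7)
The plan is to obtain Corollary~\ref{cor-energy} as a direct specialization of Theorem~\ref{thm:genericity_infty} to the gradient case. I would set $\rmF := \rmD_x\scrE : \Oomega\times(0,T)\to \Y$ (which is of class $\rmC^3$ by the $\rmC^4$-regularity of $\scrE$ and Fredholm of index $0$ by hypothesis), identify the perturbation component $y$ with $\ell\in \Y$, take $Z$ to be an open neighborhood of $0$ in $\calN_{sym}$, and introduce the bilinear perturbation
\[
\JJ : \Oomega\times Z \to \Y,\qquad \JJ(x,\mathscr{K}) := \mathscr{K}(x,\cdot).
\]
Since $\mathscr{K}$ is symmetric, $\ell+\mathscr{K}(x,\cdot) = \rmD_x\bigl(\paired{\ell}{x}+\tfrac12\mathscr{K}(x,x)\bigr)$, so that the perturbed map $\tilde\rmF$ from~\eqref{eq:perturbation_F} is precisely $\rmD_x\tilde\scrE$ with $\tilde\scrE$ as in~\eqref{eq:21}.

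Next I would verify the admissibility conditions (J1)--(J2). Every $\mathscr{K}\in\calN_{sym}$, being in the closure of finite-rank symmetric bilinear forms, induces a compact operator $h\mapsto\mathscr{K}(h,\cdot)\in\Y$; since compact perturbations preserve both the Fredholm property and the Fredholm index, (J1) holds. For (J2), given $v\in\X\setminus\{0\}$ and $w\in\Y$ I would take $\tilde v = 0$ and exhibit a \emph{symmetric} $\tilde{\mathscr{K}}\in\calN_{sym}$ with $\tilde{\mathscr{K}}(v,\cdot) = w$: choosing $\ell\in\Y$ with $\paired{\ell}{v} = 1$, the explicit rank-two symmetric form
\[
\tilde{\mathscr{K}}(x,y) := \paired{\ell}{x}\paired{w}{y}+\paired{w}{x}\paired{\ell}{y}-\paired{w}{v}\,\paired{\ell}{x}\paired{\ell}{y}
\]
fulfills $\tilde{\mathscr{K}}(v,\cdot) = w$. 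This symmetrization step is the main technical point, since the naive rank-one construction of Example~\ref{ex:admissible-J} is not symmetric and would destroy the gradient structure of the perturbation.

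Finally I would reduce the transversality conditions (T1-2-3) of Definition~\ref{ass:tran-infinity}, applied to $\rmF = \rmD_x\scrE$, to (E1-2-3). The selfadjointness of $L := \rmD^2_x\scrE(x_0,t_0)$ discussed before~\eqref{eq:1} gives $\NN(L) = \NN(L^*)$ under the identification $\X\hookrightarrow\Y^*$ induced by the $\H$-scalar product, so (T1) is precisely (E1). For any $0\neq v\in\NN(L)$ I would then take $\ell^* := v$ in (T2)--(T3): the former rewrites as $\pairing{\X^*}{\X}{\partial_t\rmD_x\scrE(x_0,t_0)}{v}\neq 0$, that is (E2), while the latter becomes $\pairing{\Y^*}{\Y}{v}{\rmD^3_x\scrE(x_0,t_0)[v,v]} = \rmD^3_x\scrE(x_0,t_0)[v,v,v]\neq 0$ by the symmetry of the trilinear form $\rmD^3_x\scrE(x_0,t_0)$, i.e.\ (E3). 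An application of Theorem~\ref{thm:genericity_infty} then delivers the residual set $U_r$ asserted in the corollary.
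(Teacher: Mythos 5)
Your proof is correct and takes essentially the same route as the paper: both reduce the corollary to Theorem \ref{thm:genericity_infty} applied to $\rmF=\rmD_x\scrE$ with the bilinear perturbation $j(x,\mathscr K)=\mathscr K(x,\cdot)$ (equivalently $K[x]$ with $K$ symmetric), the key step being the construction of a \emph{symmetric} form with prescribed value $\tilde{\mathscr K}(v,\cdot)=w$ so that the perturbation keeps the gradient structure. The only minor differences are cosmetic: you produce one unified symmetric formula where the paper splits into the cases $\paired{w}{v}\neq 0$ and $\paired{w}{v}=0$, and you spell out the identification of (T1-2-3) with (E1-2-3) via the selfadjointness of $\rmD^2_x\scrE$, which the paper carries out in the discussion preceding Definition \ref{ass:tran-E} rather than inside the proof.
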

\begin{proof}
  We apply Theorem \ref{thm:genericity_infty}:
  notice that the perturbations \eqref{eq:21}
  of $\scrE$ correspond to the family
  of perturbations for $\rmF=\rmD_x\scrE$
  \begin{equation}
    \label{eq:22}
    \tilde \rmF(x,t)=\rmF(x,t)+\ell+K[x],
  \end{equation}
  where $K\in \calL(\X,\X^*)$ is associated with $\mathscr K$ by
  \begin{equation}
    \label{eq:23}
    \paired {K\,x_1}{x_2}=\mathscr K(x_1,x_2),\quad
    \text{ for every}\quad x_1,x_2\in \X.
  \end{equation}
  Clearly, the collection of all such operators
  satisfies the admissibility conditions (K1-2);
  in order to check (K3), we fix
  $\berin x\in\X\setminus\{0\}\erin$, $\ell\in \X^*$, and we consider the bilinear forms
  \begin{align*}
    \mathscr K(x_1,x_2)&=
    \frac{\paired{\ell}{ x_1} \paired{ \ell}{ x_2}  }{
      \paired{\ell }{ x}  } &&  \text{ if } \paired{\ell}{x}
\neq 0,
\\
\mathscr K(x_1,x_2)&=
 \paired{\ell}{ x_1}  \paired{ x^*}
  {x_2}  + \paired{ x^*}{ x_1}   \paired{ \ell}
  {x_2}   & & \text{ if } \paired{\ell}{ x} =0,
  \end{align*}
  where $x^*\in \X^*$ satisfies
  $\paired {x^*}x=1$.
  It is immediate to check that the associate linear operator
  $K$ satisfies $K[x]=\ell$.
\end{proof}

 We  conclude  by  exhibiting an integral
energy functional $\mathscr{E}$, whose critical points (i.e.\ the zeroes of its space differential)
solve a semilinear elliptic equation on a bounded domain $\Omega$. Therefore, as customary we will use  the letter $x$ to denote the points in $\Omega$, and write
$\mathscr{E}(u,t)$ in place of $\mathscr{E}(x,t)$.
\begin{example}
\label{ex:concrete-pde}
\upshape
Let $\Omega $ be a  bounded connected open set  in  $\R^d$, $d\leq 3$, and let
\[
\X = H^2(\Omega) \cap H_0^1 (\Omega), \qquad  \Y = L^2(\Omega),
\]
and
\[
\mathscr{E}(u,t):= \int_\Omega \left( \frac12 |\nabla u(x)|^2 +\mathcal{W}(u(x)) - \ell(t) u(x) \right) \, \dd x
\]
with \rbe $\ell \in \mathrm{C}^4(0,T; L^2(\Omega))$ \rend and $\mathcal{W}$ the usual double-well potential
$\mathcal{W}(u) = (u^2-1)^2/4$.
Observe that $\mathscr{E} \in \mathrm{C}^4(\X \times (0,T))$ thanks
to the continuous imbedding of $\X$ in $L^\infty(\Omega)$.

We have that
\[
f= \rmD_u \mathscr{E}: \X \times (0,T) \to \Y
\text{ is given by }
f(u,t)  = Au + \mathcal{W}'(u) - \ell(t)
\]
with $A:  H^2(\Omega) \cap H_0^1 (\Omega) \to L^2(\Omega)$ the Laplacian operator with homogeneous Dirichlet boundary conditions. Note that $A$ is Fredholm with index $0$.
It follows from \cite[Thm.\ 5.26, p.\ 238]{Kato} that also $f(\cdot, t)$ is a   Fredholm map with index $0$
for every $t \in (0,T)$. We have that
\[
\rmD_u f(u_0,t_0) [v] = Av + \mathcal{W}{''}(u_0) v \qquad \text{ for all } v \in \X.
\]

Let us now construct an explicit perturbation of $f$ to which Theorem \ref{thm:genericity_infty} applies.
We take
\begin{equation}
\label{multipl-J}
\Z =\rmC(\overline\Omega) \ \text{ and } \  \JJ : \Z \times \X
\to \Y \text{ defined by }
\JJ(u,z):= zu,
\end{equation}
so that $\JJ(u,z)$ is a bilinear operator.
Observe that the perturbed field $\tilde f(u,t) =  f(u,t) + y + zu $
 is the space differential of the perturbed energy
 \begin{equation}
 \tilde {\mathscr{E}} (u,t) = \mathscr{E} (u,t) +
 \int_\Omega y\, u\,\dd x +\frac12 \int_\Omega z \,u^2 \dd x.
 \label{eq:27}
\end{equation}
 It follows again from \cite[Thm.\ 5.26, p.\ 238]{Kato} that
 $\tilde f(\cdot,t)$ is a Fredholm map of index $0$
  for every $t \in (0,T)$.

 In order to check that Theorem \ref{thm:genericity_infty} applies in this setting, it would remain to verify
 that $\JJ$ in \eqref{multipl-J} complies with  condition (J2')
 in the form \eqref{eq:25}.
 Therefore, for given $u_0\in\X, t_0\in (0,T), v_0\in \X\setminus\{0\}$ and
 $z_0\in \rmC(\overline\Omega)$
 we want to show that
 for every $w\in L^2(\Omega)$ the equation
 \begin{equation}
 \label{suff-check}
 A \tilde v + (W{''}(u_0)+z_0) \tilde v + v_0 \tilde z=w
 \end{equation}
 admits at least a solution $\tilde v\in \X$, $\tilde z\in \rmC(\overline
 \Omega)$.
Since the operator
$\tilde v\mapsto A\tilde v+  (W{''}(u_0)+z_0) \tilde v$
is Fredholm of index $0$  from $\X$ to $\Y$
(and thus its range is closed with finite codimension),
it is sufficient to prove that the only element $\xi\in L^2(\Omega)$
such that
\begin{displaymath}
  \int_\Omega \xi\Big(A \tilde v + (W{''}(u_0)+z_0) \tilde v + v_0
  \tilde z\Big)
  \,\dd x=0\quad\text{for every }\tilde v\in \X,\ \tilde z\in \rmC(\overline\Omega)
\end{displaymath}
is $\xi\equiv 0$.

Choosing $\tilde z=0$ and an arbitrary $\tilde v$ we
get
\begin{displaymath}
  \int_\Omega \xi\Big(A \tilde v + (W{''}(u_0)+z_0) \tilde v
  \Big)
  \,\dd x=0\quad\text{for every }\tilde v\in \X,
\end{displaymath}
so that $\xi\in \X$ and
 \begin{equation}
 \label{solve}
 A \xi + (W{''}(u_0)+z_0) \xi=0.
 \end{equation}

 On the hand, choosing
 $\tilde v=0$ and arbitrary $\tilde z$ we get
 $\xi=0$ on the set $\omega=\{x\in \Omega: v_0(x)\neq0\}$,
 which is open and not empty since $v_0\in \rmC(\overline\Omega) \neq
 0$.
 Then, \eqref{solve} and  the unique continuation principle (see e.g.\ \cite{Jerison-Kenig})
  imply that $\xi =0$. Therefore, condition
 \eqref{suff-check}
  is satisfied
  \begin{theorem}
    Every open neighborhood $U$ of the origin in
    $L^2(\Omega)\times \rmC(\overline\Omega)$
    contains a (dense) residual set $U_r$ such that
    the functional $\tilde{\mathscr E}$
    defined by \eqref{eq:27} satisfies the transversality conditions
    for every $(y,z)\in U_r$.
    In particular, for every $t\in (0,T)$ and $(y,z)\in U_r$ the
    solutions $u\in H^2(\Omega)\cap H^1_0(\Omega)$ of the
    equation
    \begin{equation}
      \label{eq:28}
      -\Delta u+\calW'(u)+zu=y+\ell(t)\quad\text{in }\Omega,\quad
      u=0\quad\text{on }\partial\Omega,
    \end{equation}
    are isolated in $H^2(\Omega)\cap H^1_0(\Omega)$.
  \end{theorem}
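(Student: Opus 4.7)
The plan is to apply Theorem \ref{thm:genericity_infty} to the concrete setting laid out in Example \ref{ex:concrete-pde} and then invoke Theorem \ref{prop:isolated_infty} for the isolation statement. First I would collect, from the preceding paragraphs of the example, everything that has already been verified: with $\X = H^2(\Omega) \cap H_0^1(\Omega)$, $\Y = L^2(\Omega)$, $\Z = \rmC(\overline\Omega)$, and $\JJ(u,z) = zu$, the map $\rmF = \rmD_u \mathscr{E}$ lies in $\rmC^3(\X \times (0,T); \Y)$ (thanks to $\mathscr{E}\in\rmC^4$ and the embedding $\X \hookrightarrow L^\infty$), $\rmF(\cdot,t)$ is Fredholm of index $0$ for every $t$ (so Assumption \ref{F_ass1_infty} holds), the bilinear perturbation $\JJ$ belongs to $\rmC^2(\X \times \Z;\Y)$, the perturbed differential $\rmD_u \tilde f = A + \calW''(u) + z$ remains Fredholm of index $0$ (admissibility condition (J1)), and the crucial surjectivity condition (J2') in the form \eqref{eq:25} has been reduced to the solvability of \eqref{suff-check} and settled via the unique continuation principle.

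With all the hypotheses in place, Theorem \ref{thm:genericity_infty} produces, inside any open neighborhood $U$ of the origin in $L^2(\Omega) \times \rmC(\overline\Omega)$, a residual subset $U_r$ such that for every $(y,z) \in U_r$ the field $\tilde f = \rmD_u \tilde{\mathscr{E}}$ satisfies the transversality conditions of Definition \ref{ass:tran-infinity}. Density of $U_r$ is automatic, since $L^2(\Omega) \times \rmC(\overline\Omega)$ is a complete metric space and residual subsets of a Baire space are dense. I would then translate (T1-3) into (E1-3) for $\tilde{\mathscr{E}}$ using the Hilbert framework of Section \ref{ss:4.2}: self-adjointness of $\rmD_u^2\tilde{\mathscr{E}}(u_0,t_0)$ identifies its kernel with the kernel of its adjoint via the $L^2$-pairing, so a nonzero $v\in\NN(\rmD_u^2\tilde{\mathscr{E}}(u_0,t_0))$ may play the role of $\ell^*$ in (T2) and (T3), and the identity $\rmD_u^2\rmF = \rmD_u^3\tilde{\mathscr{E}}$ turns (T3) into the scalar inequality (E3).

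Finally, the isolation statement for the solutions of \eqref{eq:28} is a direct application of Theorem \ref{prop:isolated_infty} (equivalently, of the Corollary following Definition \ref{ass:tran-E}): for each fixed $t\in (0,T)$ the solution set of \eqref{eq:28} is exactly $\calC(t) = \{u \in \X : \tilde f(u,t) = 0\}$, which consists of isolated points in $\X = H^2(\Omega) \cap H_0^1(\Omega)$ whenever the transversality conditions hold. The only genuinely nontrivial step is the verification of (J2'), and that work has already been carried out in Example \ref{ex:concrete-pde} via unique continuation; the remainder of the proof amounts to quoting the abstract theorems and matching the transversality notation in the energy setting.
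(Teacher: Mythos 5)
Your proposal is correct and follows essentially the same route as the paper: the theorem is exactly the application of Theorem \ref{thm:genericity_infty} (whose hypotheses, most notably (J2') via the unique continuation argument for \eqref{suff-check}, are verified in the body of Example \ref{ex:concrete-pde}) combined with Theorem \ref{prop:isolated_infty} for the isolation of the solutions of \eqref{eq:28}. Your added remarks on the density of residual sets and on the identification of $\NN(\rmD_u^2\tilde{\mathscr E})$ with the kernel of the adjoint in the energy setting match the discussion of Section \ref{ss:4.2}, so nothing is missing.
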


\end{example}



\bibliographystyle{amsalpha}

\end{document}